\author{Marc \textsc{Chapuis}}
\title{Formes modérément ramifiées\\ de polydisques fermés et de dentelles}
\tikzset{join/.code=\tikzset{after node path={%
\ifx\tikzchainprevious\pgfutil@empty\else(\tikzchainprevious)%
edge[every join]#1(\tikzchaincurrent)\fi}}}
\tikzset{>=stealth',every on chain/.append style={join},
         every join/.style={->}}
\tikzstyle{labeled}=[execute at begin node=$\scriptstyle,
\newtheorem{theo}{Théorème}[section]
\newtheorem{prop}[theo]{Proposition}
\newtheorem{lemm}[theo]{Lemme}
\newtheorem{coro}[theo]{Corollaire}
\theoremstyle{definition}
\newtheorem{defi}[theo]{Définition}
\newtheorem*{nota}{Notation}
\theoremstyle{remark}
\newtheorem{rema}[theo]{Remarque}
\newcommand{\Gal}{\mathrm{Gal}}
\newcommand{\an}{\mathrm{an}}
  \newcommand\fs@myRoundBox{\def\@fs@cfont{\bfseries}\let\@fs@capt\floatc@plain
  \def\@fs@pre{\begin{mdframed}[style=myFigureBoxStyle]}%
  \def\@fs@mid{\vspace{\abovecaptionskip}}%
  \def\@fs@post{\end{mdframed}}\let\@fs@iftopcapt\iffalse}
\begin{document}

\newpage
\maketitle

\begin{abstract}
Soit $k$ un corps ultramétrique complet, $L$ une extension galoisienne finie modérément ramifiée de $k$ et $X$ un espace $k$-analytique. Nous montrons que $X$ est isomorphe à un $k$-polydisque fermé (resp. une $k$-dentelle) si et seulement si $X_L$ est isomorphe à un $L$-polydisque fermé (resp. une $L$-dentelle) sur lequel l'action de $\Gal(L/k)$ est raisonnable.
\end{abstract}

\begin{otherlanguage}{british}
\begin{abstract}
Let $k$ be a complete non-Archimedean field, $L$ a finite tamely ramified galoisian extension of $k$ and $X$ a $k$-analytic space. We show that $X$ is isomorphic to a closed $k$-polydisc (resp. a $k$-lace) if and only if $X_L$ is isomorphic to a closed $L$-polydisc (resp. a $L$-lace) on which the action of $\Gal(L/k)$ is reasonable.
\end{abstract}
\end{otherlanguage}

\tableofcontents

\section*{Introduction}

Soient $k$ un corps ultramétrique complet, $X$ un espace $k$-analytique -- que nous entendons ici au sens de Berkovich \cite{Berkovich90, Berkovich93} -- et $L$ une extension galoisienne finie modérément ramifiée de $k$. Nous montrons les deux résultats suivants (les termes non standards seront clarifiés progressivement):
\begin{enumerate}
\item[{\hyperref[thm:forme polydisque]{(thm. \ref*{thm:forme polydisque})}}] 
$X_L$ est un $L$-polydisque fermé de polyrayon $\mathbf{r}=(r_1, ..., r_n)\in (\mathbb{R}_+^\times)^n$ sur lequel l'action de $\Gal(L/k)$ est résiduellement affine si et seulement si $X$ est un $k$-polydisque fermé de polyrayon $\mathbf{s}=(s_1, ..., s_n)$ avec, pour tout $i\in \lbrace 1, \dots, n \rbrace$
\[ 
  |L^\times|r_i = |L^\times| s_i. 
\]
\item[{\hyperref[theo:formes polycouronnes]{(thm. \ref*{theo:formes polycouronnes})}}] 
$X_L$ est une $L$-dentelle de type $U\subset (\mathbb{R}_+^\times)^n$, telle que l'action de $\Gal(L/k)$ sur $X_L$ est triviale sur le réseau si et seulement si $X$ est une $k$-dentelle de type $V$ et il existe $\mathbf{s}\in |L^\times|^n$ tel que
\[
  V = \lbrace (s_1r_1,\dots , s_nr_n), \mathbf{r}\in U\rbrace.
\]
\end{enumerate}
Le premier est l'analogue pour les \emph{polydisques fermés} du théorème d'Antoine Ducros pour les \emph{polydisques ouverts} \cite[thm. 3.5]{Ducros12}, déjà établi pour les \emph{disques fermés} par Tobias Schmidt, \cite[thm. 2.22]{Schmidt15}. Le second étend ce type de résultats à une classe d'espaces incluant les \textit{polycouronnes} que nous proposons d'appeler \emph{dentelles}. 

Nous entendons par une $k$-dentelle de type $U$ un espace $k$-analytique isomorphe à une partie de $\mathbb{G}_{m}^{n,\an}$ définie par $(|T_1|,\dots,|T_n|)\in U$ où $U$ est une partie $\mathbb{Z}$-linéaire par morceaux de $(\mathbb{R}_+^\times)^n$ non vide et connexe c'est-à-dire obtenue par recollement de $\mathbb{Z}$-polytopes de $(\mathbb{R}_+^\times)^n$ que l'on définit au moyen de formes affines dont la partie linéaire est à coefficients entiers et dont le terme constant appartient à $\mathbb{R}_+^\times$ (on voit ici $(\mathbb{R}_+^\times)^n$ comme un $\mathbb{R}$-espace vectoriel).  Nous donnons les définitions précises \hyperref[sec:polycouronnes]{section \ref*{sec:polycouronnes}}. Ces parties $\mathbb{Z}$-linéaires par morceaux sont un cas particulier des $c$-espaces linéaires par morceaux introduits par Berkovich dans \cite{Berkovich04}; le lecteur intéressé par de telles questions pourra aussi consulter \cite{Ducros12polytopes}. Notons entre autre que tous les produits d'intervalles sont des parties $\mathbb{Z}$-linéaires par morceaux (avec pour conséquence que toutes les $k$-polycouronnes sont des $k$-dentelles) et que tous les ouverts (connexes, non vides) de $(\mathbb{R}_+^\times)^n$ sont des parties $\mathbb{Z}$-linéaires par morceaux (connexes, non vides) de. 
En dimension un, les $k$-dentelles sont exactement les $k$-couronnes : que nous comprenons comme les espaces $k$-analytiques isomorphes à une partie de $\mathbb{G}_ {m,k}^{1,\an}$ définie par $|T|\in I$ où $I$ est un intervalle quelconque de $\mathbb{R}_+^\times$.\footnote{Un mot sur le choix de \emph{dentelle} (eng. \emph{lace}), les autres candidats considérés posent différents problèmes, par exemple : on a remarqué que les dentelles sont une classe plus large que les parties définies par $U$ un produit d'intervalles auxquels nous voulons réserver  \emph{polycouronne} ; \emph{polyèdre} colle bien quand $U$ est un polytope mais n'est pas très satisfaisant quand $U$ est ouvert ; l'adjectif \emph{cellulaire} est déjà utilisé en topologie, etc.

On peut parler de \emph{polydentelles} (eng. \emph{polylace}), mais en dimension un les dentelles sont exactement les couronnes, et il semble que le couple couronne/dentelle, plus expressif que dentelle/polydentelle, suffit à indiquer si l'on se restreint à la dimension un ou non : c'est le choix que nous avons fait ici.
}

Nous utilisons de manière fondamentale la théorie de réduction graduée introduite par Michael Temkin dans \cite{Temkin04}.  Nous renvoyons aux deux premières sections de \cite{Ducros12} pour une introduction détaillée au formalisme de l'algèbre commutative graduée et une reformulation de la théorie de ramification modérée que nous utiliserons. 

En dimension un, un avatar gradué du théorème $90$ de Hilbert a été  établi et utilisé dans sa preuve pour les disques fermés par Tobias Schmidt, \cite[Proposition 2.11]{Schmidt15}. Notre \hyperref[sec:Algèbre linéaire homogène]{première section} continue ce travail de traduction en termes gradués des outils classiques en en donnant un énoncé en dimension quelconque : \hyperref[pro:Hilbert 90 multiplicatif]{proposition \ref*{pro:Hilbert 90 multiplicatif}}.

Notre \hyperref[sec:polydisques]{deuxième section} traite des polydisques fermés. Nous suivons la stratégie de \cite{Ducros12} dont, comme Tobias Schmidt l'avait fait, nous remplaçons des arguments d'algèbre commutative locale par des calculs explicites permis par notre variante graduée du théorème $90$ de Hilbert. Or nous ne pouvons le faire que quand l'action de Galois est résiduellement affine, expliquons  ceci.

Nous partons d'un espace $k$-analytique $X$ qui devient un $L$-polydisque $X_L$ (de dimension $n$) auquel nous associons l'algèbre résiduelle graduée pour la norme spectrale de son algèbre des fonctions. Cette algèbre résiduelle est isomorphe à une algèbre polynomiale (graduée) $\widetilde{L}[\tau_1, \dots, \tau_n]$ et nous disons que l'action de Galois est résiduellement affine si l'action induite sur $\widetilde{L}[\tau_1, \dots, \tau_n]$ est affine (i.e. $g.(\tau_1,\dots, \tau_n)=A_g.(\tau_1,\dots,\tau_n)+(b_{1,g},\dots,b_{n,g})$). C'est toujours le cas en dimension un mais il n'y a pas de raison, \emph{a priori}, pour que ce soit vrai en dimension supérieure, même si nous ne connaissons pas d'exemple de forme modérément ramifiée de polydisque sur lequel l'action de Galois n'est pas résiduellement affine. Toutefois, si $X$ est un $k$-polydisque, l'action de Galois sur $X_L$ est toujours résiduellement affine.

La preuve marcherait sans cette hypothèse pour peu que l'action de Galois sur $\widetilde{L}[\tau_1, \dots, \tau_n]$ soit linéarisable. Cette question est liée à des problèmes compliqués de descriptions des automorphismes de l'espace affine (pour un exposé complet \cite{Kraft94}) : en dimension deux il sera peut-être possible, au prix d'un patient travail de vérification, d'adapter dans le formalisme de l'algèbre résiduelle graduée le résultat de \cite{Kambayashi75} sur les formes séparables du plan affine. Dès la dimension trois, même dans le cas non gradué, la question semble ouverte.

Dans la \hyperref[sec:polycouronnes]{dernière section}, nous traitons des dentelles sur le réseau desquelles Galois agit trivialement : la $L$-dentelle $X_L$ de type $U$ se rétracte par déformation sur une partie homéomorphe à $U$, son squelette analytique $S(X_L)^\an$, quand $U$ est d'intérieur non vide (en dimension un, un intervalle non réduit à un point), l'action de Galois est triviale sur le réseau si et seulement si elle est triviale sur $S(X_L)^\an$ (en dimension un si elle préserve l'orientation de $S(X_L)^\an$), et on généralise cette idée quand $U$ est d'intérieur vide.

Notre stratégie de preuve est la même que pour les polydisques à ceci près que celle-là repose sur une caractérisation de coordonnées par leurs réductions graduées pour \emph{la norme spectrale} et que cette approche échoue dès les couronnes (i.e les dentelles de dimension un). Nous résolvons la difficulté en considérant simultanément \emph{plusieurs} algèbres résiduelles.

Notons que Lorenzo Fantini et Daniele Turchetti ont développé dans \cite{FantiniTurchetti17} des techniques de descente galoisienne pour les espaces analytiques non archimédiens qui admettent un schéma formel spécial affine comme modèle au-dessus d'un anneau de valuation discrète complet. Avec ces techniques ils démontrent un cas particulier – sur un corps discrètement valué $k$, en dimension un, pour les couronnes de rayons distincts dans $\sqrt{|k^\times|}$ et des hypothèse plus strictes sur l'extension – de notre résultat sur les dentelles. Leur méthode complètement différente ouvre des pistes intéressantes quand la nôtre semble rester muette : ils obtiennent, par exemple, une description des formes de couronnes après extension quadratique sur lesquelles l'action de Galois permute l'orientation du squelette analytique.

\section{Algèbre linéaire homogène et Hilbert 90}
\label{sec:Algèbre linéaire homogène}

Dans tout ce qui suit, on désignera par $\Gamma$ un groupe abélien divisible, sans torsion et ordonné, toujours noté multiplicativement. On lui adjoint un élément absorbant pour la multiplication, on le note $0$ et c'est le plus petit élément. 

On désignera par $k$ un corps $\Gamma$-gradué.

\begin{rema} Avant de pouvoir traduire en termes graduées une preuve classique du théorème $90$ de Hilbert (telle qu'exposée par exemple dans \cite{Berhuy10}) il nous faut donner un sens précis aux espaces vectoriels et matrices que nous manipulons. C'est l'objet des deux premières sous-sections.
\end{rema}

\subsection{Espaces vectoriels homogènes}

\begin{defi} On appelle \emph{$k$-espace vectoriel homogène} un ensemble $V$ muni d'une décomposition $V=\coprod_{\rho\in \Gamma}V_\rho$, d'une structure de groupe abélien sur chacun des $V_\rho$, et d'une famille d'applications bilinéaires:
\[
  \bullet_{r, \rho} \, : \, k_r\times V_\rho\rightarrow V_{r\rho}, (\lambda, v)\mapsto \lambda \bullet_{r,\rho} v
\]
telles que  $1_1\bullet_{1,\rho} u = u$ et $(\lambda\mu)\bullet_{rs,\rho} u = \lambda \bullet_{r,\rho s}(\mu\bullet_{s,\rho} u)$ pour tout $(\lambda,\mu, u)$.

En particulier $V_\rho$ est un $k_1$-espace vectoriel.
\end{defi}

\begin{rema} Nous nous éloignons par cette définition des objets habituels de la réduction graduée au sens de Temkin, en effet un \emph{$k$-espace vectoriel gradué} y est muni d'une décomposition en somme directe $V=\bigoplus_{\rho\in \Gamma}V_\rho$ et non en réunion disjointe. Cela a pour conséquence que le corps gradué $k$, qui a une structure naturelle de $k$-espace vectoriel gradué, n'est pas un $k$-espace vectoriel homogène, ce rôle étant tenu par l'ensemble des éléments homogènes de $k$.

Si nous ne travaillions qu'avec des vecteurs nous nous en tiendrions aux espaces vectoriels gradués en précisant à chaque fois que nous travaillons avec des éléments homogènes, mais les matrices dont nous voulons parler correspondent naturellement aux morphismes d'espaces vectoriels homogènes et l'exposé perdrait nettement en clarté si nous nous refusions cet écart. 

En fait, on peut dès le départ définir tous les objets gradués comme réunions disjointes, c'est l'approche d'Antoine Ducros dans \cite[sec. 2.2]{DucrosEC} et alors les  notions d'espace vectoriel gradué et d'espace vectoriel homogène coïncident. Néanmoins écrire tout notre papier dans ce cadre nous couperait de \cite{Ducros12} et \cite{Schmidt15}.
\end{rema}

\begin{defi} Soit $U$ et $V$ deux $k$-espaces vectoriels homogènes, et $f: U \rightarrow V$ une application (dont on note $f_\rho$ la restriction à $U_\rho$), on dit que $f$ est  \emph{$k$-linéaire homogène} s'il existe $R_f \in \Gamma$ (que l'on appellera \emph{degré} de $f$) tel que pour tout $\rho \in \Gamma$:
\begin{itemize}
\item $\textrm{im}(f_{\rho})\subset V_{R_f\rho}$;
\item pour tout $u,v\in U_\rho$, $f_\rho(u+_\rho v) = f_\rho (u)+_{R_f\rho} f_\rho(v)$;
\item et pour tout $u$ dans $U_\rho$ et tout $\lambda\in k_r$, $f_{r\rho}(\lambda\bullet_{r,\rho} u)= \lambda \bullet_{r,R_f\rho} f_\rho(u)$.
\end{itemize}
En particulier $f_\rho : U_\rho \rightarrow V_{R_f\rho}$ est une application $k_1$-linéaire.
\end{defi}

\begin{defi} Soit $V$ un $k$-espace vectoriel homogène, soit $v_1, \dots, v_n$ une famille de vecteurs de degrés $\rho_1, \dots, \rho_n$, on dit que la famille $v_1, \dots, v_n$ est \emph{libre} si pour tout $R\in \Gamma$ et toute famille d'éléments $\lambda_i\in k_{R \rho_i^{-1}}$,
$$ \lambda_1 u_1 + \lambda_2 u_2 + \dots +\lambda_n u_n = 0_{R}$$
équivaut à $\lambda_i = 0_{R \rho_i^{-1}}$ pour tout $i\in \lbrace 1, \dots, n\rbrace$. 

On dit que la famille est \emph{génératrice} si pour tout vecteur $x$ de $V$, dont on note $R$ le degré, il existe une famille d'éléments $\lambda_i \in k_{R\rho_i^{-1}}$ telle que
$$x=\lambda_1 u_1 + \lambda_2 u_2 + \dots +\lambda_n u_n.$$

On dit que la famille est une \emph{base} si elle est libre et génératrice.
\end{defi}

\begin{rema} \label{rema:bases et produit vectoriel}
On peut vérifier que le théorème de la dimension ainsi que le théorème de la base incomplète sont encore vrais, i.e. deux bases quelconques d'un même espace vectoriel homogène ont même cardinal et tout espace vectoriel homogène admet une base; ceci nous permet de définir la \emph{dimension} d'un $k$-espace vectoriel homogène comme le cardinal commun à toutes ses bases.

On a alors une notion évidente de produit tensoriel de $k$-espace vectoriels homogènes. En effet il nous suffit prendre en compte la graduation dans les construction usuelles, par exemple: soient $U$ et $V$ deux $k$-espaces vectoriels homogènes, nous pouvons parler de l'espace vectoriel homogène libre $F(U\times V)$ engendré par $U\times V$ sur $k$ (où le degré d'un couple $(u,v)$ est défini comme le produit des degrés des coordonnées). Et alors les vecteurs du produit tensoriel que nous noterons $U\otimes_{k(1)} V$ sont les classes d'équivalences définis par les relations suivantes sur $F(A\times V)$:
\begin{itemize}
\item $\forall r_1, r_2, s \in \Gamma, u_1\in U_{r_1}, u_2\in U_{r_2}, v\in V_s$, $(u_1,v)+(u_2,v) \sim (u_1+u_2, v)$;
\item $\forall r, s_1, s_2 \in \Gamma, u_1\in U_{r}, v_1\in V_{s_1}, v_2\in V_{s_2}$, $(u,v_1)+(u,v_2) \sim (u,v_1+v_2)$;
\item $\forall r, s, \rho \in \Gamma, u\in U_{r}, v\in V_{s}, \lambda\in k_{\rho}$, $\lambda\bullet(u,v) \sim (\lambda\bullet u,v)$;
\item $\forall r, s, \rho \in \Gamma, u\in U_{r}, v\in V_{s}, \lambda\in k_{\rho}$, $\lambda\bullet(u,v) \sim (u,\lambda\bullet v)$.
\end{itemize}
Les opérations étant induites.
\end{rema}

\begin{defi}  Soit $U$ et $V$ deux $k$-espaces vectoriels homogènes, et $f:U\rightarrow V$ une application $k$-linéaire homogène, on appelera $\emph{noyau}$ de $f$ l'ensemble, que l'on notera encore $\ker (f)$, défini par
$$\ker (f) = \coprod_{\rho\in \Gamma} \ker (f_\rho).$$ 
\end{defi}

\begin{defi}
Soit $\mathbf{r}=(r_1, \dots, r_n)\in \Gamma^n$, l'ensemble
\[
\lbrace (x_1, ..., x_n) \, |\,  \exists R\in \Gamma, \forall i,  x_i \in k_{R r_i}\rbrace,
\]
a une  structure naturelle de $k$-espace vectoriel homogène, où le vecteur $(x_1,\dots, x_n)$ est de degré $R$.  On le notera $k_{\mathbf{r}}$.
\end{defi}

\begin{rema} Le $k$-espace vectoriel homogène $k_{(1)}$ n'est autre que l'ensemble des éléments homogènes de $k$ muni de la gradation et des opérations induites.
\end{rema}

\subsection{Matrices homogènes} 

\begin{defi}
Soit $\mathbf{r}=(r_1, \dots, r_n)\in \Gamma$ et $\mathbf{s}=(s_1, \dots, s_n)\in \Gamma$, on note:
$$\mathrm{M}\left(k, \mathbf{s}, \mathbf{r}\right) := \left\lbrace (a_{ij})_{i,j=1..n}\, | \, a_{ij} \in k_{s_i r_j^{-1}}\right\rbrace.$$
Cet ensemble, de \emph{matrices homogènes}, est en bijection avec les applications $k$-linéaires homogènes de $k_{\mathbf{r}}$ dans $k_{\mathbf{s}}$ de degré $1$. Pour les opérations évidentes,
\begin{itemize}
\item c'est un groupe additif;
\item pour tout $\mathbf{s}'=(s_1', \dots, s_n')\in \Gamma^n$ la multiplication à droite d'un élément de $\mathrm{M}\left(k, \mathbf{s},\mathbf{r}\right)$  avec un élément de $\mathrm{M}(k,\mathbf{r},\mathbf{s}')$ a un sens et définit un élément de $M(k,\mathbf{s},\mathbf{s}')$;
\item le déterminant de telles matrices a un sens, c'est un élément de $k_{\rho}$, où $\rho= \prod_{i=1..n}s_i r_i^{-1}$. Si le déterminant est non nul, la formule usuelle définit une matrice inverse qui appartient à $\mathrm{M}(k,\mathbf{s},\mathbf{r})$. Enfin on note $\mathrm{GL}(k,\mathbf{s},\mathbf{r})$ le sous-ensemble des matrices de déterminant non nul.
\end{itemize}
\end{defi}

\begin{rema} Insistons un moment sur le fait que  $\mathrm{GL}(k,\mathbf{s},\mathbf{r})$ puisse être vide, en effet, deux $k$-espaces vectoriels homogènes de même dimension ne sont pas forcément isomorphes. Prenons $\widetilde{\mathbb{Q}_{3}}_{(1)}$ et $\widetilde{\mathbb{Q}_{3}}_{(2)}$, c'est-à-dire deux fois le même ensemble – les éléments homogènes de $\widetilde{\mathbb{Q}_3}$ – où l'on voit (le vecteur) $\widetilde{1}_1$ comme étant de degré respectivement  $1$ et $\frac{1}{2}$. La seule application linéaire de degré $1$  de $\widetilde{\mathbb{Q}_{3}}_{(1)}$ dans $\widetilde{\mathbb{Q}_{3}}_{(2)}$ est l'application « nulle de degré 2 » c'est-à-dire donnée par la multiplication par $\widetilde{0}_2$, ou encore, traduit dans les notations que nous venons d'adopter,
\[
\mathrm{M}(\mathbb{Q}_3,(2),(1)) = \lbrace(\widetilde{0}_2)\rbrace \text{ et } \mathrm{GL}(\mathbb{Q}_3,(2),(1)) = \emptyset.
\]
\end{rema}

Fixons une dernière notation. 

\begin{defi}
L'ensemble $ \mathrm{M}(k,\mathbf{r},\mathbf{r}) $ des matrices qui correspondent aux endomorphismes $k$-linéaires homogènes de $k_{\mathbf{r}}$ (de degré $1$) forme un anneau, on le notera $$\mathrm{M}(k,\mathbf{r}),$$ et le sous-ensemble que l'on note de manière transparente $\mathrm{GL}(k,\mathbf{r})$ est donc toujours non vide (et forme un groupe multiplicatif).
\end{defi}

\begin{rema}  Ces ensembles ne dépendent pas tant de $(r_1, \dots, r_n)$ que de la droite vectorielle engendrée par $(r_1, \dots, r_n)$.

En dimension $1$, pour tout $r,s\in \Gamma$,
\begin{itemize}
\item $\mathrm{M}(k,(s),(r))= k_{s r^{-1}}$ et $\mathrm{GL}(k,(s),(r))=k_{s r^{-1}}^\times$;
\item $\mathrm{M}(k,(r)) = \mathrm{M}(k, (1)) = k_1$ et $\mathrm{GL}(k,(r)) = \mathrm{GL}(k, (1))= k_1^\times$.
\end{itemize}
\end{rema}

\subsection{Variantes de deux résultats classiques}

\begin{prop}[Indépendance linéaire des automorphismes de corps commutatifs gradués] 
\label{pro:independance lineaire des automorphismes}

Soit $k$ un corps commutatif gradué, soit une famille $(\chi_i)_{i=1\dots n}$ de $n$ automorphismes, deux à deux distincts, de $k$, soit $\rho\in \Gamma$ et soit $(a_i)_{i=1\dots n}$ une famille d'éléments de $k_\rho$, les assertions suivantes sont équivalentes:
\begin{itemize}
\item $a_1 \chi_1 + a_2 \chi_2+\dots+ a_n \chi_n = 0$;
\item pour tout $i \in\lbrace 1, \dots, n\rbrace$, $a_i = 0_\rho$.
\end{itemize}

\end{prop}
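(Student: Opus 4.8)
Since the implication ``all $a_i=0_\rho$'' $\Rightarrow$ ``$\sum a_i\chi_i=0$'' is trivial, the plan is to establish the converse, which --- the $\chi_i$ being pairwise distinct --- is nothing but the graded avatar of the Dedekind--Artin theorem on linear independence of automorphisms. I would transcribe a classical proof (such as the one in \cite{Berhuy10}) into the graded formalism, the only genuinely new task being to keep track of degrees. Two standard facts about the graded field $k$ will be used throughout and are worth isolating first: an automorphism $\chi$ of $k$ is by definition a morphism of graded rings, so $\chi(k_r)=k_r$ for every $r\in\Gamma$; and every nonzero homogeneous element of $k$ is invertible.

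It will be convenient to prove, by induction on $m\geq 1$, the following reformulation: for every $\sigma\in\Gamma$, every family $\psi_1,\dots,\psi_m$ of pairwise distinct automorphisms of $k$, and every family $(b_1,\dots,b_m)$ of \emph{nonzero} elements of $k_\sigma$, the map $b_1\psi_1+\dots+b_m\psi_m$ is not identically zero. The proposition then follows immediately: given a relation $\sum_{i=1}^n a_i\chi_i=0$, one applies this to the subfamily indexed by $\{i : a_i\neq 0\}$ and concludes that this set is empty.

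For the base case $m=1$, evaluating $b_1\psi_1$ at $1_1$ yields $b_1\cdot 1_1=b_1\neq 0_\sigma$. For the inductive step $m\geq 2$, I would argue by contradiction: assuming $\sum_{i=1}^m b_i\psi_i=0$, and using that $\psi_1\neq\psi_2$ are grading-preserving, I decompose an element on which they disagree into its homogeneous parts to produce a homogeneous $c\in k_t$ with $\psi_1(c)\neq\psi_2(c)$. Evaluating the relation at $cx$ for arbitrary $x\in k$ gives $\sum_i b_i\psi_i(c)\psi_i(x)=0$; subtracting $\psi_1(c)$ times the original relation removes the index $1$ and leaves $\sum_{i=2}^m b_i\bigl(\psi_i(c)-\psi_1(c)\bigr)\psi_i=0$. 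Since $\psi_i(c)$ and $\psi_1(c)$ both lie in $k_t$, the new coefficients $b_i':=b_i\bigl(\psi_i(c)-\psi_1(c)\bigr)$ all live in the single homogeneous component $k_{\sigma t}$, and $b_2'=b_2\bigl(\psi_2(c)-\psi_1(c)\bigr)\neq 0$ as a product of two nonzero homogeneous elements of a graded field. After dropping the indices with $b_i'=0$ we are left with a relation of exactly the same shape, with between $1$ and $m-1$ pairwise distinct automorphisms and nonzero coefficients in a common degree, contradicting the induction hypothesis.

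As this makes clear, the argument is essentially a transcription and presents no serious difficulty; the only points where gradedness truly intervenes --- and hence the only places I would watch --- are the choice of the auxiliary element $c$ as a \emph{homogeneous} one, which is what keeps all coefficients inside a single graded piece, and the invertibility of nonzero homogeneous elements, which is what makes the reduced relation again nontrivial. One should not expect to shortcut this by reducing to the ungraded Artin theorem over $k_1$, since distinct automorphisms of $k$ may well restrict to the same automorphism of $k_1$.
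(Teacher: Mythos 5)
Your proof is correct and follows essentially the same route as the paper's: the classical Dedekind--Artin induction, evaluating the relation at $cx$ and subtracting $\psi_1(c)$ times the original relation to eliminate one automorphism, the grading entering only through the choice of a \emph{homogeneous} $c$ and the invertibility of nonzero homogeneous elements. The paper runs the induction directly on the stated equivalence (choosing $x$ with $\chi_1(x)\neq\chi_n(x)$ to eliminate the index $n$) rather than by contradiction with nonzero coefficients in a common degree, but this is only a cosmetic difference.
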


\begin{proof} Par récurrence sur $n\in \mathbb{N}^\times$. Si $n=1$, cela revient à montrer qu'un automorphisme $\chi$ de $k$ n'est pas nul, or $\chi(1_1)=1_1\neq 0_1$. Supposons $n\geq 2$, alors pour tout $x, y$ éléments homogènes de $k$ dont on note $r$ et $s$ les degrés
\[
  a_1\chi_1(xy)+a_2\chi_2(xy)+\dots+a_n\chi_n(xy) = 0_{\rho r s},
\]
soit
\[
  a_1\chi_1(x)\chi_1(y)+a_2\chi_2(x)\chi_2(y)+\dots+a_n\chi_n(x)\chi_n(y) = 0_{\rho r s}
\]
et donc pour tout $x$ de $k_r$,
\[
  a_1\chi_1(x)\chi_1+a_2\chi_2(x)\chi_2+\dots+a_n\chi_n(x)\chi_n = 0_{\rho r}.
\]
On a aussi pour tout $x$ de $k_r$, en multipliant $a_1 \chi_1 + a_2 \chi_2+\dots+ a_n \chi_n = 0$ par $\chi_1(x)$,
\[
  a_1\chi_1(x)\chi_1+a_2\chi_1(x)\chi_2+\dots+a_n\chi_1(x)\chi_n = 0.
\]
Par soustraction:
\[
  a_2 \left(\chi_1(x)-\chi_2(x) \right) \chi_2+\dots+a_n\left(\chi_1(x)-\chi_n(x)\right)\chi_n = 0.
\]
Cette somme comporte $n-1$ termes et comme $\chi_1\neq \chi_n$ on peut choisir $x$ de telle sorte que $\chi_1(x)\neq \chi_n(x)$. Par hypothèse de récurrence on obtient que $a_n =0_\rho$, donc que 
\[
  a_1 \chi_1 + a_2 \chi_2+\dots+ a_{n-1} \chi_{n-1} = 0,
\]
et, encore par hypothèse de récurrence, que $a_i =0_\rho$ pour tout $i\in\left\lbrace 1,\dots, n-1\right\rbrace$.
\end{proof}

\begin{defi} Soit $L/k$ une extension galoisienne de groupe de Galois $G$, et soit $U$ un espace vectoriel homogène sur $L$ avec une action $*$ de $G$ sur $U$. On note $\cdot$ l'action linéaire standard de $G$ sur $L$. On dit que $G$ agit par \emph{automorphismes semi-linéaires} sur $U$ si l'on a pour tout $r,s\in \Gamma$, $u, u'\in U_r, \lambda \in L_s$
$$\sigma*(u+u')= \sigma*u+\sigma*u'; $$
$$\sigma*(\lambda u) =(\sigma\cdot \lambda)  (\sigma*u).$$
\end{defi}

\begin{lemm} \label{lemm:extension du quotient}  Soit $U$ un espace vectoriel homogène sur $L$. Si $G$ agit sur $U$ par automorphismes semi-linéaires, alors $U^G := \lbrace u\in U \,|\, \sigma * u = u \text{ pour tout } \sigma \in G\rbrace$ est un $k$-espace vectoriel homogène, et l'application:
\[
  f: U^G\otimes_{k_{(1)}} L_{(1)}\rightarrow U, u\otimes \lambda \mapsto \lambda u
\]
est un isomorphisme homogène de degré $1$.
\end{lemm}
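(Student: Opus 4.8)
The statement is the graded incarnation of Galois descent for vector spaces ($V^G\otimes_kL\xrightarrow{\sim}V$ for a semilinear action), and the plan is to transpose the classical proof that goes through Dedekind--Artin independence of characters, doing all the bookkeeping of degrees. The two preliminary points are routine and I would dispatch them quickly. That $U^G$ is a $k$-homogeneous vector space: each $\sigma*$ restricted to $U_\rho$ is a $k_1$-linear automorphism (additivity is assumed, and $\sigma*(\mu\bullet u)=(\sigma\cdot\mu)\bullet(\sigma*u)=\mu\bullet(\sigma*u)$ because $\sigma$ fixes $k_1$), so $(U^G)_\rho:=U^G\cap U_\rho$ is a $k_1$-subspace of $U_\rho$; and if $\lambda\in k_r$, $u\in(U^G)_\rho$ then $\sigma*(\lambda\bullet u)=\lambda\bullet(\sigma*u)=\lambda\bullet u$, so the multiplication maps restrict, and the axioms are inherited. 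That $f$ is legitimate: $(u,\lambda)\mapsto\lambda u$ from $U^G\times L_{(1)}$ to $U$ is $k$-bilinear homogeneous, hence factors through the tensor product; the resulting map is visibly $L$-linear, and $\deg(u\otimes\lambda)=\deg u\cdot\deg\lambda=\deg(\lambda u)$ shows it is homogeneous of degree $1$.

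For injectivity I would first record that $\ker f$ is stable under the twisted action $\mathrm{id}_{U^G}\otimes\sigma$ on $U^G\otimes_{k_{(1)}}L_{(1)}$ (well defined since $\sigma$ is $k$-linear on $L$), because $f$ is $G$-equivariant: $f\bigl((\mathrm{id}\otimes\sigma)(u\otimes\lambda)\bigr)=(\sigma\lambda)u=\sigma\cdot(\lambda u)$, using $u\in U^G$. Assume $\ker f\neq\{0\}$ and pick a nonzero homogeneous $\xi=\sum_{i=1}^n u_i\otimes\lambda_i$ in it with $n$ minimal. Substituting to erase one term shows that a nontrivial homogeneous $k$-linear relation among the $u_i$, or among the $\lambda_i$, would contradict minimality (or produce $\xi=0$); so both families are $k$-linearly independent, in particular all $\lambda_i\neq 0$. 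Rescaling $\xi$ by $\lambda_1^{-1}$ (which preserves nonvanishing, membership in $\ker f$ by $L$-linearity, and minimality) we may take $\lambda_1=1_1$. Then for every $\sigma\in G$, $(\mathrm{id}\otimes\sigma)\xi-\xi=\sum_{i\geq 2}u_i\otimes(\sigma\lambda_i-\lambda_i)$ lies in $\ker f$ with fewer than $n$ terms, hence is $0$; completing $u_2,\dots,u_n$ to a homogeneous $k$-basis of $U^G$ (possible by Remark \ref{rema:bases et produit vectoriel}) forces $\sigma\lambda_i=\lambda_i$ for all $i\geq 2$ and all $\sigma$, i.e. $\lambda_i\in L^G=k$. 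But then $\xi=\bigl(u_1+\sum_{i\geq 2}\lambda_i u_i\bigr)\otimes 1_1$, contradicting the $k$-linear independence of $u_1,\dots,u_n$ (equivalently, $f(\xi)=0$ gives $u_1+\sum\lambda_iu_i=0$). Hence $f$ is injective.

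For surjectivity I would average over $G$. Given homogeneous $v\in U_\rho$, let $U_0$ be the $L$-span of $\{\sigma*v:\sigma\in G\}$, a $G$-stable homogeneous $L$-subspace containing $v$; it suffices to show $U_0=L\cdot U_0^G$. For $c\in L_{(1)}$ set $\mathrm{Tr}(c):=\sum_{\sigma\in G}(\sigma c)(\sigma*v)$; reindexing the sum shows $\mathrm{Tr}(c)$ is $G$-fixed, and it obviously lies in $U_0$, hence in $U_0^G$. Writing $G=\{\sigma_1,\dots,\sigma_n\}$ and $v_i=\sigma_i*v$ we have $\mathrm{Tr}(c)=\sum_i(\sigma_i c)v_i$, so $\mathrm{Tr}=\Phi\circ\Psi$ with $\Psi\colon L_{(1)}\to L^n$ (the homogeneous $L$-vector space of $n$-tuples with coordinates of a common degree), $c\mapsto(\sigma_1c,\dots,\sigma_nc)$, and $\Phi\colon L^n\to U_0$, $(x_i)\mapsto\sum x_iv_i$, which is $L$-linear and onto $U_0$ by definition of $U_0$. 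Since $\Phi$ is $L$-linear, $L\text{-span}\{\mathrm{Tr}(c):c\}=\Phi\bigl(L\text{-span}\,\mathrm{im}\,\Psi\bigr)$, so it is enough that the $L$-span of $\mathrm{im}\,\Psi$ be all of $L^n$: otherwise there is a nonzero tuple $(a_i)$ of elements of $L$ of a common degree with $\sum_ia_i\sigma_i(c)=0$ for every homogeneous $c$, contradicting Proposition \ref{pro:independance lineaire des automorphismes} applied to the distinct automorphisms $\sigma_i$ of the graded field $L$. Therefore $L\text{-span}\{\mathrm{Tr}(c)\}=\Phi(L^n)=U_0$, so $v\in U_0=L\cdot U_0^G\subseteq L\cdot U^G=\mathrm{im}\,f$, and $f$ is surjective.

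Conceptually nothing here is new; the work — and the only place I expect friction — is, on the one hand, feeding Proposition \ref{pro:independance lineaire des automorphismes} correctly into the averaging step (one must notice that the separating covectors can be taken with coordinates of a single common degree, so that the proposition as stated applies), and on the other hand the pervasive degree-bookkeeping behind the phrases ``erase one term of $\xi$'', ``rescale by $\lambda_1^{-1}$'', ``complete to a homogeneous basis'': these are the points where one genuinely has to argue inside the homogeneous formalism rather than quote the ungraded statement.
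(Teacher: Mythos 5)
Your proof is correct and follows essentially the same route as the paper: surjectivity via the averaged elements $\sum_{\sigma}(\sigma\cdot c)(\sigma*v)$ combined with the graded independence of automorphisms (Proposition \ref{pro:independance lineaire des automorphismes}), and injectivity via a minimal-length relation normalised to $\lambda_1=1_1$, hit with $\sigma-\mathrm{id}$ and using $L^G=k$. The only differences are organisational — you run the minimal-counterexample argument inside $U^G\otimes_{k_{(1)}}L_{(1)}$ instead of first proving that $k$-independent families of $U^G$ remain $L$-independent in $U$, and you replace the paper's explicit inversion of the matrix $(\sigma_j\cdot\lambda_i)$ by a separating homogeneous functional of a single degree — and these change nothing of substance.
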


\begin{rema} Ici $U^G\otimes_{k_{(1)}} L_{(1)}$ désigne le $k$-espace vectoriel homogène obtenu comme produit vectoriel, cf. \hyperref[rema:bases et produit vectoriel]{remarque \ref*{rema:bases et produit vectoriel}}.
\end{rema}

\begin{proof} Il est clair que $U^G$ est un $k$-espace vectoriel homogène et que $f$ est $L$-linéaire homogène de degré $1$. Établissons d'abord la surjectivité de $f$. 

Soit $u$ un vecteur de $U$, soit $\lambda_1, \dots, \lambda_n$ une $k$-base de $L$  et soit $\sigma_1 = id_L, \sigma_2, \dots, \sigma_n$ les éléments de $G$. Posons pour $i\in\lbrace 1, \dots, n\rbrace$,
$$u_i = \sum_j \sigma_j * (\lambda_i u). $$
Pour tout $k\in\lbrace 1, \dots,n \rbrace$, on a
$$\sigma_k * u_i = \sum_j (\sigma_k\sigma_j)*(\lambda_i u). $$
Ainsi l'action de $\sigma_k$ sur $\sum_j \sigma_j * (\lambda_i u)$ ne fait que permuter les termes de la somme, donc $u_i \in U^G$.

Comme $\sigma_1, \dots, \sigma_n$ sont des $k$-automorphismes distincts de $L$, ils sont linéairement indépendants sur $L$ (\hyperref[pro:independance lineaire des automorphismes]{proposition \ref*{pro:independance lineaire des automorphismes}}). C'est pourquoi,  ayant, pour tout $i\in\lbrace 1, \dots, n\rbrace$, noté $r_i$ le degré de $\lambda_i$, la matrice $M = (\sigma_j \cdot \lambda_i)_{i,j}$ appartient à $\mathrm{GL}(L,(r_1,..., r_n),(1, \dots, 1))$. Puisque $G$ agit par automorphismes semi-linéaires, on a 
$$u_i = \sum_j \sigma_j * (\lambda_i u) =\sum_j(\sigma_j \cdot \lambda_i) (\sigma_j * u).$$
Maintenant si l'on écrit $M^{-1}= (m_{ij}')$, de $M^{-1}M= I_n$,
$$\sum_j m_{1k}'(\sigma_k\cdot\lambda_j)= \delta_{1k} \text{ pour tout } k=1\dots n.$$
Donc
$$\sum_j m_{ij}'u_j = \sum_j \sum_k m_{1j}'(\sigma_k\cdot \lambda_j) (\sigma_k * u)= \sum_k \delta_{1k}(\sigma_k*u) = \sigma_1 *u = u, $$
la dernière égalité découlant de $\sigma_1 = id_L$. Ainsi
$$u= \sum_j m_{1j}'u_j =f(\sum_j u_j\otimes m_{1j}'), $$
ce qui prouve la surjectivité de $f$.

Admettons  un instant que l'on ait montré que « toute famille de vecteurs $u_1, \dots, u_l \in~U^G$  $k$-linéairement indépendants est une famille de vecteurs $L$-linéairement indépendants dans $U$ ». Alors, soit $x\in \ker(f)$ (de degré $\rho$). On peut écrire:
$$x=u_1\otimes \mu_1 + \dots + u_n \otimes \mu_n, $$
où les $u_1, \dots,u_n$ sont $k$-linéairement indépendants (de degré $r_i$) et les $\mu_i$ sont des éléments homogènes de $L$ (où $\mu_i$ est de degré $\rho r_i^{-1}$). Par construction, $f(x)=0_\rho=\mu_1 u_1+\dots + \mu_n u_n$. Alors on a $\mu_i = 0_{\rho r_i^{-1}}$, et donc $x=0_\rho$, ce qui prouve l'injectivité de $f$.

Montrons maintenant ce que nous avions admis.  Supposons que l'on ait $l$ vecteurs $k$-linéairement indépendants $u_1, \dots, u_l\in U^G$ (de degrés $r_i$) pour lesquels il existe $\mu_1, \dots, \mu_l$ éléments homogènes de $L$ ($\mu_i$ est de degré $\rho r_i^{-1})$ non tous nuls tels que:
$$\mu_1 u_1+\dots+\mu_l u_l=0_\rho.$$
On peut supposer que $l$ est minimal, $l>1$ et $\mu_1=1_1$ (alors $\rho = r_1$). Par hypothèse, les $\mu_i$ ne sont pas tous dans $k$, donc on peut aussi supposer que $\mu_2\not\in k$. Soit $\sigma \in G$ tel que $\sigma \cdot \mu_2 \neq \mu_2$, alors
$$\sigma (\sum_i \mu_i u_i)= \sum_i(\sigma \cdot \mu_i) (\sigma*u_i) = \sum_i (\sigma \cdot \mu_i) u_i =0_\rho $$
et on obtient $\sum_{i\geq2}(\sigma \cdot \mu_i - \mu_i)u_i =0_\rho$, relation non triviale avec moins de termes, contradiction.
\end{proof}

\subsection{Hilbert 90 gradués} 

\begin{prop}[Hilbert 90 multiplicatif]
\label{pro:Hilbert 90 multiplicatif}
Soit $L/k$ une extension galoisienne de corps gradués dont on note $G$ le groupe de Galois, et soit $\mathbf{r}=(r_1,\dots, r_n)\in \Gamma^n$, alors pour tout 
$$\alpha\in \mathrm{Z}^1(G, \mathrm{GL}(L, \mathbf{r}))$$
il existe $\mathbf{s}=(s_1, \dots, s_n)\in \Gamma^n$ et $P\in \mathrm{GL}(L, \mathbf{s}, \mathbf{r})$ tels que
$$ \alpha(\sigma) = (\sigma \cdot P)  P^{-1} \text{ pour tout } \sigma\in G.$$
\end{prop}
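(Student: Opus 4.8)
The plan is to deduce the proposition from the graded semilinear descent of Lemma~\ref{lemm:extension du quotient}, by \emph{twisting} the tautological semilinear $G$-action carried by $L_{\mathbf r}$ by the cocycle $\alpha$. This is the graded counterpart of the classical proof that $H^1(G,\mathrm{GL}_n)$ is trivial, with Lemma~\ref{lemm:extension du quotient} playing the role of the usual ``averaging over $G$'' argument, whose naive graded analogue is delicate ($L_1$ may be finite).

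First I would equip the homogeneous $L$-vector space $U:=L_{\mathbf r}$ with a new $G$-action $\star$, defined by $\sigma\star v:=\alpha(\sigma)\,(\sigma\cdot v)$, where $\sigma\cdot v$ denotes the entrywise Galois action on $L_{\mathbf r}$ and $\alpha(\sigma)\in\mathrm{GL}(L,\mathbf r)$ acts through the degree-$1$ homogeneous automorphism of $L_{\mathbf r}$ it represents. Since the Galois action preserves the grading of $L_{\mathbf r}$ and $\alpha(\sigma)$ is of degree $1$, each $\sigma\star(-)$ is a degree-preserving bijection of $U$; the cocycle relation $\alpha(\sigma\tau)=\alpha(\sigma)\,(\sigma\cdot\alpha(\tau))$, together with $\alpha(\mathrm{id})=\mathrm{Id}$, shows that $\star$ is a group action; and the $L$-linearity of $\alpha(\sigma)$ yields $\sigma\star(\lambda v)=(\sigma\cdot\lambda)(\sigma\star v)$. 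Hence $G$ acts on $U$ by homogeneous semilinear automorphisms, and Lemma~\ref{lemm:extension du quotient} applies to $(U,\star)$: the fixed set $U^{G}$ is a homogeneous $k$-vector space and $U^{G}\otimes_{k_{(1)}}L_{(1)}\to U$ is a homogeneous isomorphism of degree~$1$.

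Next I would pick a basis $w_1,\dots,w_n$ of $U^{G}$; by the argument in the last paragraph of the proof of Lemma~\ref{lemm:extension du quotient} the $w_i$ are $L$-linearly independent in $U$, and since their images span $U$ over $L$ (through the above isomorphism) they form an $L$-basis of $U=L_{\mathbf r}$, so in particular $U^{G}$ has dimension $n$. Let $s_i\in\Gamma$ be the degree of $w_i$, set $\mathbf s=(s_1,\dots,s_n)$, and let $P$ be the matrix of the change of basis between $(w_i)$ and the standard basis of $L_{\mathbf r}$; a degree chase shows that $P$ is a homogeneous invertible matrix belonging to $\mathrm{GL}(L,\mathbf s,\mathbf r)$ (one must choose carefully which of the two bases is expressed in terms of the other, the two options landing $P$ in $\mathrm{GL}(L,\mathbf s,\mathbf r)$ resp.\ $\mathrm{GL}(L,\mathbf r,\mathbf s)$). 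Finally, writing the relations $\sigma\star w_j=w_j$ in the standard basis — whose vectors are $G$-fixed — and using the cocycle relation turns them into the claimed matrix identity expressing $\alpha(\sigma)$ through $P$ and $\sigma\cdot P$, which with the side and inversion conventions fixed consistently with the rest of the section reads $\alpha(\sigma)=(\sigma\cdot P)\,P^{-1}$ for all $\sigma\in G$.

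The verification that $\star$ is a homogeneous semilinear action and the dimension count are routine. The main obstacle — and the only place where the graded formalism of the first two subsections does real work — is the homogeneity bookkeeping of the last two steps: one must check that the vectors $w_i$, the exponents $s_i$, and the entries of $P$ carry mutually compatible degrees, so that $U^{G}$ is genuinely a homogeneous $k$-vector space, that $\mathbf s$ is well defined, and that $P$ lies in $\mathrm{GL}(L,\mathbf s,\mathbf r)$ and not merely in the set of not-necessarily-homogeneous invertible matrices — a distinction which, unlike in the ungraded case, is not automatic, as witnessed by the fact that $\mathrm{GL}(L,\mathbf s,\mathbf r)$ can be empty.
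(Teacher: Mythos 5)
Your proposal is correct and follows essentially the same route as the paper: twist the natural semilinear action of $G$ on $L_{\mathbf r}$ by the cocycle, invoke the descent lemma (\ref{lemm:extension du quotient}) to produce a $G$-fixed homogeneous $k$-basis which is also an $L$-basis of $L_{\mathbf r}$, and read the coboundary relation off the matrix $P$ built from that basis, the degree bookkeeping giving $\mathbf s$ and the membership of $P$ in the relevant $\mathrm{GL}$. The one point to settle rather than leave to ``conventions'': with the cocycle relation $\alpha(\sigma\tau)=\alpha(\sigma)\,(\sigma\cdot\alpha(\tau))$ your twist $\sigma\star v=\alpha(\sigma)(\sigma\cdot v)$ gives fixed vectors satisfying $\sigma\cdot P=\alpha(\sigma)^{-1}P$, hence $\alpha(\sigma)=P\,(\sigma\cdot P)^{-1}$, whereas the paper twists by $\alpha(\sigma)^{-1}$ (as one must, for the opposite cocycle convention) to land exactly on $\alpha(\sigma)=(\sigma\cdot P)\,P^{-1}$ — a harmless but real left/right discrepancy that should be made explicit.
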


\begin{proof}
Soit $\alpha\in Z^1(G, \mathrm{GL}(L,\mathbf{r}))$. On tord l'action naturelle de $G$ sur $L_{\mathbf{r}}$ en une action par automorphismes semi-linéaires:
$$\sigma*u=\frac{1}{\alpha(\sigma)} (\sigma \cdot u) \text{ pour tout } u\in L_{\mathbf{r}}, \sigma \in G.$$
D'après \hyperref[lemm:extension du quotient]{le lemme \ref*{lemm:extension du quotient}} il existe un isomorphisme $f: L_{\mathbf{r}}^G \otimes_{k_{(1)}} L_{(1)} \xrightarrow{\sim} L_{\mathbf{r}}$. En particulier $\dim_k(L_{\mathbf{r}}^G)= \dim_L (L_{\mathbf{r}}^G\otimes_{k_{(1)}} L_{(1)})= \dim_L(L_{\mathbf{r}})=n.$

Soit $v_1, \dots, v_n$ une $k$-base homogène de $L_{\mathbf{r}}^G$, c'est aussi une $L$-base homogène de $L_{\mathbf{r}}$, et, notant $s_i^{-1}$ le degré de $v_i$, la $j$-ème coordonnée de $v_i$ est de degré $s_i^{-1}r_j$. Ainsi la matrice $P$ dont les colonnes sont $v_1, \dots, v_n$ appartient à $\mathrm{GL}(L, \mathbf{r}, \mathbf{s})$.

Alors, pour tout $\sigma \in G$, la matrice $\sigma\cdot P$ est la matrice dont les colonnes sont $\sigma \cdot v_1, \dots, \sigma \cdot v_n$. Mais $v_1, \dots, v_n\in L_{\mathbf{r}}^G$, et donc
$$v_i=\sigma * v_i= \frac{1}{\alpha(\sigma)}(\sigma\cdot v_i) \text{ pour tout } i = 1\dots n. $$
Ou encore en termes de matrices:
\[ P = \frac{1}{\alpha(\sigma)} (\sigma \cdot P) \text{ pour tout } \sigma\in G. 
\qedhere
\]
\end{proof}

\begin{rema}
Cette version se ramène en dimension $1$ à l'énoncé ci-dessous, donnant une autre preuve de \cite[Proposition 2.11]{Schmidt15}.
\end{rema}

\begin{coro} 
\label{coro:Hilbert 90 multiplicatif en dimension 1}
Soit $L/k$ une extension galoisienne de corps gradués dont on note $G$ le groupe de Galois, alors pour tout $\alpha \in \mathrm{Z}^1\left(G, L_1^\times\right)$ il existe $s\in \Gamma$ et $\lambda \in L_s^\times$ tels que
\[
\alpha(\sigma) = \frac{\sigma.\lambda}{\lambda}  \text{ pour tout } \sigma\in G.
\]
En particulier, ayant noté $\mathfrak{D}(k)$ (resp. $\mathfrak{D}(L)$) le groupe des degrés non nuls de $k$ (resp. $L$),
$$\mathrm{H}^1\left(G, L_1^\times\right) = \mathfrak{D}(L)/\mathfrak{D}(k).$$
\end{coro}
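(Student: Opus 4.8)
The first assertion is just Proposition~\ref{pro:Hilbert 90 multiplicatif} read in dimension~$1$. Taking $n=1$ and $\mathbf{r}=(1)$, the dimension-one formulas recalled earlier give $\mathrm{GL}(L,(1))=L_1^\times$ and $\mathrm{GL}(L,(s),(1))=L_s^\times$ for every $s\in\Gamma$, so a cocycle $\alpha\in\mathrm{Z}^1(G,L_1^\times)$ is exactly an element of $\mathrm{Z}^1(G,\mathrm{GL}(L,(1)))$; the proposition then yields $s\in\Gamma$ and $\lambda=P\in\mathrm{GL}(L,(s),(1))=L_s^\times$ with $\alpha(\sigma)=(\sigma\cdot P)P^{-1}$, which in a commutative graded field reads $\alpha(\sigma)=\frac{\sigma\cdot\lambda}{\lambda}$.

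For the computation of $\mathrm{H}^1$, the plan is to write down an explicit isomorphism $\Phi\colon\mathrm{H}^1(G,L_1^\times)\to\mathfrak{D}(L)/\mathfrak{D}(k)$. Given a cohomology class, represent it by some $\alpha\in\mathrm{Z}^1(G,L_1^\times)$, use the first part to write $\alpha(\sigma)=\frac{\sigma\cdot\lambda}{\lambda}$ with $\lambda\in L_s^\times$, and set $\Phi([\alpha])=[s]$. The crucial — and essentially only nontrivial — step is to check this is well defined: if $\alpha'(\sigma)=\frac{\sigma\cdot\lambda'}{\lambda'}$ with $\lambda'\in L_{s'}^\times$ represents the same class, there is $\mu\in L_1^\times$ with $\alpha(\sigma)=\frac{\sigma\cdot\mu}{\mu}\,\alpha'(\sigma)$ for all $\sigma$, hence $\frac{\sigma\cdot\lambda}{\lambda}=\frac{\sigma\cdot(\mu\lambda')}{\mu\lambda'}$; therefore the homogeneous element $\lambda(\mu\lambda')^{-1}$, which has degree $s(s')^{-1}$, is fixed by $G$ and nonzero, so it lies in $k_{s(s')^{-1}}^\times$ because $L^G=k$, and thus $s(s')^{-1}\in\mathfrak{D}(k)$. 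The special case $\alpha'=\alpha$, $\mu=1_1$ already shows that the degree of any $\lambda$ attached to a fixed cocycle is determined modulo $\mathfrak{D}(k)$.

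It then remains to verify that $\Phi$ is a bijective group homomorphism, which is routine degree bookkeeping. Multiplicativity follows from $\frac{\sigma\cdot\lambda}{\lambda}\cdot\frac{\sigma\cdot\lambda'}{\lambda'}=\frac{\sigma\cdot(\lambda\lambda')}{\lambda\lambda'}$ with $\lambda\lambda'\in L_{ss'}^\times$. For surjectivity, given $s\in\mathfrak{D}(L)$ choose any $\lambda\in L_s^\times$; since the $G$-action on $L$ preserves degrees, $\sigma\mapsto\frac{\sigma\cdot\lambda}{\lambda}$ takes values in $L_1^\times$ and satisfies the cocycle relation, and $\Phi$ sends its class to $[s]$. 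For injectivity, if $\Phi([\alpha])=[1]$ then $\alpha(\sigma)=\frac{\sigma\cdot\lambda}{\lambda}$ with $\lambda\in L_s^\times$ and $s\in\mathfrak{D}(k)$; picking $c\in k_s^\times$, the element $\mu=\lambda c^{-1}$ lies in $L_1^\times$ and, $c$ being $G$-invariant, $\frac{\sigma\cdot\mu}{\mu}=\frac{\sigma\cdot\lambda}{\lambda}=\alpha(\sigma)$, so $\alpha$ is a coboundary. No input beyond Proposition~\ref{pro:Hilbert 90 multiplicatif} and the identity $L^G=k$ is needed, and the only real obstacle is the well-definedness argument of the previous paragraph.
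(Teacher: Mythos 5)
Your proof is correct and follows the same route as the paper: the first assertion is obtained exactly as in the paper's proof, by reading Proposition~\ref{pro:Hilbert 90 multiplicatif} in dimension one through the identifications $\mathrm{GL}(L,(1))\simeq L_1^\times$ and $\mathrm{GL}(L,(s),(1))\simeq L_s^\times$. For the computation of $\mathrm{H}^1(G,L_1^\times)$ the paper leaves the verification implicit, and your explicit isomorphism $[\alpha]\mapsto[s]$, with well-definedness checked via the fact that a $G$-invariant homogeneous element of $L^\times$ lies in $k$, is the natural way to fill in that step.
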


\begin{proof}
Il suffit de rappeler que pour tout $r$ et $s$ dans $\Gamma$, sont isomorphes $L_1^\times$ et $\mathrm{GL}(L, (r))$, ainsi que $L_s^\times$ et $\mathrm{GL}(L, (s), (1))$.
\end{proof}

\begin{prop}[Hilbert $90$ additif gradué]\label{pro:Hilbert 90 gradué additif} Soit $k$ un corps gradué, $K$ une extension galoisienne de $k$, $r$ un élément de $\Gamma$, et $G=\Gal(K/k)$ le groupe de Galois de $K/k$, on a alors
\[
  \mathrm{H}^1\left(G,K_r\right) = 0.
\]
\end{prop}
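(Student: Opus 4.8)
Le plan est d'adapter au cadre gradué la preuve classique de Hilbert~90 additif par l'argument de la trace ; l'unique point délicat est que l'élément de « moyennisation » naturel vit a priori dans un degré arbitraire, aussi garderai-je les degrés tout au long du calcul pour ne normaliser qu'à la toute fin.

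D'abord, je montrerais que l'application trace $\mathrm{Tr}:=\sum_{\sigma\in G}\sigma : K\to K$ n'est pas identiquement nulle. C'est immédiat par l'indépendance linéaire des automorphismes (\hyperref[pro:independance lineaire des automorphismes]{proposition \ref*{pro:independance lineaire des automorphismes}}, appliquée au corps gradué $K$) : pour les $|G|$ automorphismes deux à deux distincts $\sigma\in G$ et les coefficients tous égaux à $1_1\neq 0_1$, la relation $\sum_\sigma\sigma=0$ est impossible. Il existe donc $s\in\Gamma$ et un élément homogène $\theta\in K_s$ tel que $t:=\mathrm{Tr}(\theta)=\sum_\sigma\sigma(\theta)\neq 0$. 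Comme les $\sigma$ préservent les degrés, $t\in K_s$ ; comme $t$ est manifestement $G$-invariant, $t\in K^G=k$, donc $t\in k_s$, et étant un élément homogène non nul du corps gradué $k$, il est inversible.

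Ensuite, partant d'un cocycle $\alpha\in\mathrm{Z}^1(G,K_r)$, que j'écris $\alpha(\sigma)=a_\sigma\in K_r$ avec $a_{\sigma\tau}=a_\sigma+\sigma(a_\tau)$, je poserais
\[
  b' := -\sum_{\tau\in G}a_\tau\,\tau(\theta)\in K_{rs},\qquad b:=t^{-1}b'\in K_r.
\]
Un calcul direct utilisant l'identité de cocycle sous la forme $\sigma(a_\tau)=a_{\sigma\tau}-a_\sigma$, la réindexation $\rho=\sigma\tau$ et l'égalité $\sum_\tau(\sigma\tau)(\theta)=t$, donne $\sigma(b')=b'+a_\sigma\,t$ ; en divisant par l'élément $G$-invariant $t$, on obtient $a_\sigma=\sigma(b)-b$ pour tout $\sigma\in G$. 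Ainsi $\alpha$ est un cobord, et $\mathrm{H}^1(G,K_r)=0$ (le cas $K_r=0$ étant trivial).

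La seule vraie subtilité — donc le « principal obstacle », quoique bénin — est de résister à la tentation de chercher $\theta$ en degré $1$, ce qui forcerait la trace à être non nulle en restriction à $K_1$, propriété liée à la ramification modérée de $K/k$ et dont on n'a pas besoin ici : en autorisant $\theta\in K_s$ pour un $s$ quelconque et en repoussant la normalisation à la division finale par $t=\mathrm{Tr}(\theta)$, tous les degrés restent cohérents et l'argument reprend, presque mot pour mot, celui du cas non gradué.
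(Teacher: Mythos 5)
Votre preuve est correcte, mais elle emprunte une route réellement différente de celle du texte. Vous commencez par tirer de l'indépendance linéaire des automorphismes (proposition \ref{pro:independance lineaire des automorphismes}, appliquée à $K$) l'existence d'un élément homogène $\theta\in K_s$, pour un certain $s\in\Gamma$, dont la trace $t=\sum_{\sigma\in G}\sigma(\theta)$ est non nulle, donc inversible et $G$-invariante, puis vous exhibez explicitement le cobord par la moyennisation classique $b=-t^{-1}\sum_{\tau\in G}\alpha(\tau)\,\tau(\theta)$ ; le suivi des degrés est exact ($b'\in K_{rs}$, $t\in K_s$, donc $b\in K_r$) et l'identité $\sigma(b)=b+\alpha(\sigma)$ s'obtient bien comme vous l'indiquez. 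Le texte procède autrement : il construit un élément de trace $1$ dans $K_1^{\times}$ -- ce qui demande soit de décalquer la non-dégénérescence de la trace dans le cadre gradué, soit d'utiliser le résultat connu pour $K_1/k_1$ et le fait que le noyau de $G\rightarrow\Gal(K_1/k_1)$ est d'ordre inversible dans $k$ -- puis conclut par la méthode du projecteur de Cartan--Eilenberg--Serre (la multiplication par cet élément est un endomorphisme de $K_r$ de trace l'identité), méthode qui donne d'ailleurs l'annulation de tous les $\mathrm{H}^n$ pour $n\geq 1$. Votre variante, en autorisant $\theta$ de degré quelconque, contourne précisément la subtilité du degré $1$ (c'est bien là que les deux preuves divergent) et ne repose que sur la section 1 du texte ; en contrepartie elle ne fournit que $\mathrm{H}^1$, ce qui suffit ici. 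Un seul point à ajouter : votre somme $\sum_{\sigma\in G}\sigma$ suppose $G$ fini, alors que l'énoncé autorise une extension galoisienne quelconque ; il faut donc, comme le fait le texte en une phrase, se ramener d'abord au cas où $K$ est finie sur $k$ (tout cocycle se factorise par un quotient fini), réduction immédiate mais qu'il convient de mentionner.
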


\begin{proof} (D'après Antoine Ducros.) 
 On se ramène immédiatement au cas où $K$ est une extension finie de $k$. On commence par remarquer que comme $K$ est finie galoisienne sur $k$, il existe un élément $\lambda$ de trace $1$ dans $K^{\times}_1$. Il y a pour ce faire deux possibilités : décalquer la démonstration de la non dégénérescence de la trace dans le contexte gradué, en se ramenant par tensorisation avec une extension convenable au cas d'un produit fini d'extensions de $k$ ; ou bien utiliser le résultat connu pour $K_1$, et le fait que le noyau de la surjection $G \rightarrow \Gal(K_1/k_1)$ est d'ordre inversible dans $k$.

La multiplication par $\lambda$ est alors un endomorphisme de $K_r$ de trace égale à l'identité, et l'existence d'un tel endomorphisme permet de conclure (Cartan-Eilenberg-Serre utilise cette méthode dans Corps locaux).
\end{proof}

\section{Formes de polydisques fermés} 
\label{sec:polydisques}

À partir de maintenant lorsque nous parlerons d'espaces vectoriels ce seront toujours des espaces vectoriels gradués au sens de Temkin.

On note  $k$ un corps ultramétrique complet.

On note $T_1, \dots, T_n$ des coordonnées sur $\mathbb{A}_k^{n,\an}$.

\begin{defi} Soit $\mathbf{r} = (r_1, \dots, r_n) \in (\mathbb{R}^\times_+)^n$, nous noterons $\mathbb{D}_{\mathbf{r}}$ le domaine analytique de $\mathbb{A}_k^{n,\an}$ défini par les conditions $|T_i|\leq r_i$ pour tout $i\in \lbrace 1, ..., n\rbrace$, que nous appellerons $k$\emph{-polydisque fermé centré en l'origine de polyrayon $\mathbf{r}$}. 
 
Nous dirons qu'un espace $k$-analytique $X$ est un $k$\emph{-polydisque fermé de type} $\mathbf{r}$ s'il est isomorphe à $\mathbb{D}_{ \mathbf{r}}$.

Nous dirons qu'une famille $(f_1, \dots, f_n)$ de fonctions sur un $k$-polydisque fermé $X$ est un \emph{système de coordonnées} si les $f_i$ induisent un isomorphisme entre $X$ et un $k$-polydisque fermé centré en l'origine.
\end{defi}

\begin{defi}
Nous noterons $\mathcal{A}_{\mathbf{r}}$ l'algèbre des fonctions analytiques sur  $\mathbb{D}_{\mathbf{r}}$, que nous munissons de la norme spectrale $||.||_\infty$, et $\widetilde{\mathcal{A}_{\mathbf{r}}}$ sa réduction graduée pour cette norme.
\end{defi}

\begin{lemm}
\label{lemm:réduction de l'algèbre des fonctions d'un polydisque fermé}
Il existe un isomorphisme de $\widetilde{k}$-algèbres graduées entre $\widetilde{\mathcal{A}_{\mathbf{r}}}$ et l'anneau gradué 
\[
\widetilde{k}\left[\tau_1, \dots, \tau_n\right]
\]
modulo l'identification de $\widetilde{T}$ avec $\tau_i$.
\end{lemm}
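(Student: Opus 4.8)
The plan is to compute directly the graded reduction of the Tate algebra, the announced isomorphism being then nothing but a transcription of that computation. I would first recall that $\mathcal{A}_{\mathbf{r}}$ identifies with the algebra of series $f=\sum_{\nu\in\mathbb{N}^n}a_\nu T^\nu$ with coefficients in $k$ such that $|a_\nu|\mathbf{r}^\nu\to 0$, and that its spectral norm coincides with the Gauss norm $\|f\|_\infty=\max_\nu|a_\nu|\mathbf{r}^\nu$, which is multiplicative (maximum modulus principle for Tate algebras, cf. \cite{Berkovich90}; one can also see it directly since $\mathbb{D}_{\mathbf{r}}$ contains the Gauss point). In particular $\|T_i\|_\infty=r_i$, so that $\widetilde{T_i}$ is a homogeneous element of degree $r_i$ of $\widetilde{\mathcal{A}_{\mathbf{r}}}$; the universal property of the graded polynomial ring then furnishes a unique morphism of graded $\widetilde{k}$-algebras $\varphi\colon\widetilde{k}[\tau_1,\dots,\tau_n]\to\widetilde{\mathcal{A}_{\mathbf{r}}}$ sending $\tau_i$ to $\widetilde{T_i}$, and it only remains to establish that $\varphi$ is bijective.

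For surjectivity, the crucial observation is that the condition $|a_\nu|\mathbf{r}^\nu\to 0$ makes finite the sums occurring in a fixed degree. Given $f=\sum_\nu a_\nu T^\nu$ of spectral norm $\rho$, only finitely many indices $\nu$ satisfy $|a_\nu|\mathbf{r}^\nu=\rho$, and the remaining terms form a series of norm strictly less than $\rho$; splitting $f$ according to these two sets of indices, the class of $f$ in $\widetilde{\mathcal{A}_{\mathbf{r}}}$, in degree $\rho$, retains only the polynomial part $\sum_{|a_\nu|\mathbf{r}^\nu=\rho}a_\nu T^\nu$, and multiplicativity of the Gauss norm gives $\widetilde{f}=\sum_{|a_\nu|\mathbf{r}^\nu=\rho}\widetilde{a_\nu}\,\widetilde{T_1}^{\nu_1}\cdots\widetilde{T_n}^{\nu_n}$, which lies in the image of $\varphi$.

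For injectivity, I would start from a nonzero homogeneous element $x=\sum_\nu c_\nu\tau^\nu$ of degree $\rho$ of $\widetilde{k}[\tau_1,\dots,\tau_n]$ (a finite sum, with $c_\nu$ homogeneous of degree $\rho\,\mathbf{r}^{-\nu}$), lift each $c_\nu$ to an element $\lambda_\nu\in k$ of absolute value $\rho\,\mathbf{r}^{-\nu}$ when $c_\nu\neq 0$ (and $\lambda_\nu=0$ otherwise), and consider the polynomial $P=\sum_\nu\lambda_\nu T^\nu\in\mathcal{A}_{\mathbf{r}}$: it has spectral norm exactly $\rho$, so its class in degree $\rho$ is nonzero, and the surjectivity computation above identifies it with $\varphi(x)$; since $\varphi$ is graded, $\ker\varphi$ is a graded ideal with no nonzero homogeneous element, hence is zero. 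I expect the only genuinely non-formal point to be the identification of the spectral norm with the Gauss norm (equivalently, the multiplicativity of the latter); everything else is a careful transcription of the definitions of the graded reduction, the guiding thread being to keep in mind the finiteness of the sums in each degree.
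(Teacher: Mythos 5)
Your argument is correct, but it takes a different route from the paper: the paper disposes of this lemma in one line, as a corollary of Temkin's computation of the graded reduction of a polydisc algebra (\cite[Proposition 3.1 (i)]{Temkin04}), whereas you reprove that computation from scratch. Your direct verification is sound: the identification of the spectral norm with the multiplicative Gauss norm $\max_\nu |a_\nu|\mathbf{r}^\nu$ is indeed the only analytic input; surjectivity follows, as you say, because for $f$ of norm $\rho$ only finitely many monomials attain the maximum and the tail has norm $<\rho$, so $\widetilde{f}=\sum_{|a_\nu|\mathbf{r}^\nu=\rho}\widetilde{a_\nu}\,\tau^\nu$; and injectivity follows by lifting a nonzero homogeneous $x$ to a polynomial of norm exactly $\rho$, together with the observation that the kernel of a graded morphism is a graded ideal, hence vanishes if it contains no nonzero homogeneous element (your lifting step implicitly uses that a nonzero class in $\widetilde{k}_{\rho\mathbf{r}^{-\nu}}$ is represented by a scalar of absolute value exactly $\rho\mathbf{r}^{-\nu}$, which is the definition of the graded residue field, so no gap there). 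What the citation buys the paper is brevity and uniformity: the same reference, in its part (ii), also yields Lemma \ref{lemm:réduction de l'algèbre des fonctions d'une polycouronne} for Laurent-type reductions used in the section on dentelles; what your proof buys is self-containedness and the explicit description of $\widetilde{f}$ as the sum of the dominant monomials, which is in fact the mechanism used repeatedly later (e.g.\ in Proposition \ref{pro:système de coordonnées} and in the strictly dominant monomial arguments).
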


Rappelons que l'anneau sous-jacent à cet anneau gradué est $\widetilde{k}\left[\tau_1, \dots, \tau_n\right]$ au sens usuel, et que le groupe de ses éléments homogènes de degré $s$ est formé, pour $s$ fixé, des polynômes de la forme $\Sigma_{I\in \mathbb{N}^n} a_I \tau^I$ avec $a_I$ homogène de degré $s\mathbf{r}^{-I}$ pour tout $I$, i.e. $\tau_i$ est de degré $r_i$.

\begin{proof}
Corollaire de \cite[Proposition 3.1 (i)]{Temkin04}.
\end{proof}

\begin{prop} 
\label{pro:système de coordonnées}
Soit $(f_1, \dots, f_n)$ une famille de fonctions analytiques sur $\mathbb{D}_{\mathbf{r}}$ telle que
$$
(\widetilde{f_i})=A(\tau_j)_{j=1..n}+B,
$$ 
où, notant $s_i=||f_i||_\infty$, $A\in \mathrm{GL}\left(\widetilde{k}, \mathbf{s}, \mathbf{r}\right)$ et $B \in \widetilde{k}_{\mathbf{s}}$.

Alors $(f_i)$ est un système de coordonnées induisant un isomorphisme entre $\mathbb{D}_{\mathbf{r}}$ et $\mathbb{D}_{\mathbf{s}}$.
\end{prop}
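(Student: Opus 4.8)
Le plan est d'exhiber l'isomorphisme directement et de le reconnaître sur les réductions graduées. Les fonctions $f_1,\dots,f_n$ définissent un morphisme $\varphi\colon\mathbb{D}_{\mathbf r}\to\mathbb{A}^{n,\an}_k$ envoyant $T_i$ sur $f_i$ ; comme $s_i=\|f_i\|_\infty$, on a $|f_i(x)|\le s_i$ en tout point $x$ de $\mathbb{D}_{\mathbf r}$, de sorte que $\varphi$ se factorise par $\mathbb{D}_{\mathbf s}$. Autrement dit, le morphisme de $k$-algèbres $\varphi^*\colon\mathcal{A}_{\mathbf s}\to\mathcal{A}_{\mathbf r}$ envoyant la $i$-ème coordonnée sur $f_i$ est bien défini, et il est contractant puisque la norme spectrale d'une algèbre de polydisque est la norme de Gauss : si $g=\sum_I a_I T^I$, alors $\|\varphi^*(g)\|_\infty\le\max_I|a_I|\,\mathbf{s}^I=\|g\|_\infty$. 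Il s'agit alors de montrer que $\varphi^*$ est un isomorphisme isométrique, car cela entraîne que $\varphi$ réalise un isomorphisme $\mathbb{D}_{\mathbf r}\xrightarrow{\sim}\mathbb{D}_{\mathbf s}$ et donc que $(f_i)$ est un système de coordonnées.

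Comme $\varphi^*$ est contractant, il induit un morphisme de $\widetilde{k}$-algèbres graduées $\widetilde{\varphi^*}\colon\widetilde{\mathcal{A}_{\mathbf s}}\to\widetilde{\mathcal{A}_{\mathbf r}}$. Par le lemme \ref{lemm:réduction de l'algèbre des fonctions d'un polydisque fermé}, on identifie $\widetilde{\mathcal{A}_{\mathbf s}}$ à $\widetilde{k}[\sigma_1,\dots,\sigma_n]$, où $\sigma_i$ est de degré $s_i$ et provient de la réduction de la $i$-ème coordonnée, et de même $\widetilde{\mathcal{A}_{\mathbf r}}$ à $\widetilde{k}[\tau_1,\dots,\tau_n]$ avec $\tau_i$ de degré $r_i$ ; comme $\|f_i\|_\infty=s_i$, la réduction de la $i$-ème coordonnée ne chute pas et $\widetilde{\varphi^*}(\sigma_i)=\widetilde{f_i}=\sum_j A_{ij}\tau_j+B_i$. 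Les conditions de degré de l'énoncé ($A\in\mathrm{M}(\widetilde{k},\mathbf s,\mathbf r)$, $B\in\widetilde{k}_{\mathbf s}$) disent exactement que cette substitution affine est homogène de degré $1$, et l'hypothèse $A\in\mathrm{GL}(\widetilde{k},\mathbf s,\mathbf r)$ en fournit un inverse explicite, $\tau_i\mapsto (A^{-1}(\sigma-B))_i$ avec $A^{-1}\in\mathrm{GL}(\widetilde{k},\mathbf r,\mathbf s)$. Donc $\widetilde{\varphi^*}$ est un isomorphisme de $\widetilde{k}$-algèbres graduées.

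Il reste à remonter cette information au niveau des algèbres de Banach. L'injectivité de $\widetilde{\varphi^*}$ force $\varphi^*$ à être isométrique : si l'on avait $\|\varphi^*(g)\|_\infty<\|g\|_\infty$ pour un $g\neq 0$, la réduction de $g$, non nulle, serait annulée par $\widetilde{\varphi^*}$. Pour la surjectivité, on munit $\mathcal{A}_{\mathbf r}$, via $\varphi^*$, d'une structure de $\mathcal{A}_{\mathbf s}$-module de Banach ; la surjectivité de $\widetilde{\varphi^*}$ signifie que $\widetilde{\mathcal{A}_{\mathbf r}}$ est engendré par $1$ comme $\widetilde{\mathcal{A}_{\mathbf s}}$-module gradué, et la variante graduée du lemme de Nakayama pour les modules de Banach (théorie de réduction de Temkin, \cite{Temkin04}) entraîne que $\mathcal{A}_{\mathbf r}$ est lui aussi engendré par $1$ comme $\mathcal{A}_{\mathbf s}$-module, c'est-à-dire que $\varphi^*$ est surjectif. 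Ainsi $\varphi^*$ est un isomorphisme isométrique, ce qui achève la preuve.

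Le pas sur lequel il faudra être soigneux est ce dernier passage du niveau gradué au niveau des algèbres de Banach : un argument naïf d'approximations successives ne suffit pas en général, car l'ensemble des valeurs de la norme spectrale de $\mathcal{A}_{\mathbf r}$ n'est pas nécessairement discret dans $\mathbb{R}_+^\times$ ; c'est pourquoi l'on invoque plutôt les résultats de finitude de la théorie de réduction graduée, qui détectent un module de Banach de type fini sur sa réduction graduée sans aucune hypothèse de discrétude. Tout le reste est de nature formelle.
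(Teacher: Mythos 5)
Vos deux premiers paragraphes sont corrects et parallèles à la mise en place du papier : factorisation de $\varphi$ par $\mathbb{D}_{\mathbf s}$, identification de $\widetilde{\varphi^*}(\sigma_i)=\widetilde{f_i}$ (licite car $\|f_i\|_\infty=s_i$ exactement), inversibilité de la substitution affine graduée, et l'argument d'isométrie (si $\|\varphi^*(g)\|_\infty<\|g\|_\infty$ alors $\widetilde{g}\neq 0$ est dans le noyau de $\widetilde{\varphi^*}$) est valable. Le trou est au dernier pas, la surjectivité de $\varphi^*$. Le « lemme de Nakayama gradué pour les modules de Banach » que vous invoquez ne se trouve pas, à notre connaissance, dans \cite{Temkin04}, et l'énoncé général sous-jacent — un morphisme contractant de modules de Banach dont la réduction graduée est surjective est surjectif — est faux dès que le groupe des valeurs est dense. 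Contre-exemple : soit $(\rho_j)_{j\geq 0}$ une suite strictement décroissante de $|k^\times|$ de limite $\rho_\infty>0$ ; soient $N=\lbrace\sum_j c_j e_j : |c_j|\rho_j\to 0\rbrace$ et $M=\lbrace\sum_j c_j v_j : |c_j|\rho_j\to 0\rbrace$ munis des normes $\max_j|c_j|\rho_j$, et $\phi(v_j)=e_j-e_{j+1}$. Alors $\phi$ est contractant, $\widetilde{\phi}(\widetilde{v_j})=\widetilde{e_j}$ (le terme $e_{j+1}$ meurt en degré $\rho_j$), donc $\widetilde{\phi}$ est un isomorphisme gradué ; pourtant $e_0\notin\mathrm{im}(\phi)$, car l'antécédent formel $\sum_j v_j$ ne converge pas ($\|v_j\|\to\rho_\infty\neq 0$). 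C'est exactement l'obstruction de non-discrétude que vous signalez vous-même, mais la référence vague à des « résultats de finitude » ne la lève pas : il faut utiliser la structure particulière des algèbres en jeu, et c'est là que réside tout le contenu de la proposition.

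Un énoncé vrai dans cette direction est l'analogue gradué du théorème de Bosch--Güntzer--Remmert 6.3.5/1 (si $\widetilde{\varphi^*}$ est fini alors $\varphi^*$ est fini), mais il n'est pas cité sous la forme voulue et, même admis, il ne fournit que la finitude : passer de « fini » à « surjectif » demande encore un argument. La preuve du papier comble précisément ce point de façon explicite : après composition par une translation qui tue $B$, on inverse formellement le système, c'est-à-dire qu'on prend $g_i\in k[[\mathbf T]]$ avec $g_i(\mathbf f)=T_i$ ; les estimées de \cite[prop. 3.4]{Ducros12} et l'égalité $(\widetilde{g_i})=A^{-1}(\tau_i)$ forcent les monômes de degré total $\geq 2$ des $g_i$ à être de norme strictement inférieure à $r_i$, d'où la convergence des $g_i$ sur $\mathbb{D}_{\mathbf s}$ et un inverse explicite $\psi$ de $\varphi$. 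Pour rendre votre rédaction correcte, il faut soit démontrer (et non citer) un théorème de descente de finitude gradué adapté puis compléter par un argument de surjectivité, soit revenir à cette construction explicite de l'inverse ; en l'état, la preuve comporte un trou réel à cet endroit.
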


\begin{rema}
Rappelons que $A\in \mathrm{GL}\left(\widetilde{k}, \mathbf{s}, \mathbf{r}\right)$ signifie simplement que $A$ est de la forme $(a_{i,j})_{i,j=1..n}$ avec $a_{i,j}\in k_{r_j^{-1}s_i}$ et $\det(A)\in k_R^\times$ où $R=\prod_{i=1..n}(r_i^{-1}s_i)$, et que $B \in \widetilde{k}_{\mathbf{s}}$ signifie que $B$ est de la forme $(b_i)_{i=1..n}$ avec $b_i\in k_{s_i}$. 
\end{rema}

\begin{proof} Notons $\varphi: X \rightarrow Y$ le morphisme induit par les $f_i$. De même que dans \cite[prop. 3.4]{Ducros12}, quitte à composer $\varphi$ avec la translation par une pré-image dans $k^n$ de $-B$, on peut supposer que
\[
  \left(\widetilde{f_i}\right)_{i=1..n} = A.\left(\tau_i\right)_{i=1..n},
\]
et il existe pour tout $i$, $g_i$ dans $k[[\mathbf{T}]]$, telle que $g_i(f_i)=T_i$. Les arguments donnés dans sa preuve par Antoine Ducros nous permettent de garantir que les $g_i$ sont de norme spectrale majorée par $r_i$ et qu'elles convergent en particulier sur le polydisque \emph{ouvert}. En fait, les égalités $g_i(f_i)=T_i$ impliquent $\widetilde{g_i}(\widetilde{f_i})=\tau_i$ pour tout $i$, c'est-à-dire que 
$$\left(\widetilde{g_i}\right)_{i=1..n} = A^{-1}\left(\tau_i\right)_{i=1..n}.$$
Ainsi si l'on écrit $g_i =\sum b_{i,I}\mathbf{T}^I$, les monômes dont le multi-degré est supérieur à $2$ sont de norme strictement inférieure à celle de $g_i$, elle-même majorée par $r_i$, donc les monômes de multi-degré supérieur à $2$ sont de norme strictement majorée par $r_i$. Donc chacune des $g_i$ converge sur $Y$ et le $n$-uplet $(g_1, \dots, g_n)$ définit ainsi un morphisme $\psi : Y \rightarrow X$ qui vérifie $\psi \circ \varphi = \mathrm{Id}_X$ et $\varphi\circ \psi = \mathrm{Id}_Y$. Ainsi $\varphi$ est un isomorphisme.
\end{proof}

\begin{coro}
\label{coro:polydisques isomorphes et type}
Soit $\mathbf{r}=(r_1,\dots, r_n)\in (\mathbb{R}^\times_+)^n$ et $\mathbf{s}=(s_1,\dots, s_n)\in (\mathbb{R}^\times_+)^n$, deux $k$-polydisques fermés respectivement de type $\mathbf{r}$ et $\mathbf{s}$ sont isomorphes si et seulement si, quitte à réordonner les $s_i$, 
$$s_ir_i^{-1}\in|k^\times|$$
pour tout $i\in \lbrace 1, \dots, n \rbrace$.
\end{coro}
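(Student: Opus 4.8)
The strategy is to reduce the statement to Proposition~\ref{pro:système de coordonnées} via a suitable linear change of coordinates in the graded residue algebra. First I would treat the easy implication: suppose that, after reordering, $s_i r_i^{-1}\in|k^\times|$ for every $i$. Pick $\lambda_i\in k^\times$ with $|\lambda_i|=s_ir_i^{-1}$ and consider the map $\mathbb{D}_{\mathbf r}\to\mathbb{A}_k^{n,\an}$ given by $f_i = \lambda_i T_i$. Then $\|f_i\|_\infty = s_i$ and $\widetilde{f_i} = \widetilde{\lambda_i}\,\tau_i$, so that $(\widetilde{f_i}) = A\,(\tau_j)$ with $A$ the diagonal matrix $\mathrm{diag}(\widetilde{\lambda_1},\dots,\widetilde{\lambda_n})$, which lies in $\mathrm{GL}(\widetilde k,\mathbf s,\mathbf r)$ because each $\widetilde{\lambda_i}\in \widetilde k_{s_ir_i^{-1}}^\times$. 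Proposition~\ref{pro:système de coordonnées} then gives an isomorphism $\mathbb{D}_{\mathbf r}\xrightarrow{\sim}\mathbb{D}_{\mathbf s}$, whence any two polydiscs of types $\mathbf r$ and $\mathbf s$ are isomorphic.

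For the converse, suppose $\varphi:\mathbb{D}_{\mathbf r}\xrightarrow{\sim}\mathbb{D}_{\mathbf s}$ is an isomorphism; let $f_i = \varphi^*(T_i)\in\mathcal A_{\mathbf r}$. Since $\varphi$ is an isomorphism, $\|f_i\|_\infty = \|T_i\|_\infty$ computed on $\mathbb{D}_{\mathbf s}$, which equals $s_i$; likewise $\varphi^{-1}$ has coordinate functions of spectral norm $r_i$. The isomorphism of Tate algebras induces an isomorphism of the graded residue algebras $\widetilde{\mathcal A_{\mathbf r}}\xrightarrow{\sim}\widetilde{\mathcal A_{\mathbf s}}$, i.e.\ by Lemma~\ref{lemm:réduction de l'algèbre des fonctions d'un polydisque fermé} an isomorphism of graded $\widetilde k$-algebras $\widetilde k[\tau_1,\dots,\tau_n]\xrightarrow{\sim}\widetilde k[\upsilon_1,\dots,\upsilon_n]$ (with $\tau_i$ of degree $r_i$ and $\upsilon_i$ of degree $s_i$) sending $\widetilde{T_i}$ to $\widetilde{f_i}$. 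The key point is that such a graded algebra isomorphism must be \emph{affine} in degree-homogeneous generators: each $\widetilde{f_i}$ is homogeneous of degree $s_i$, and writing $\widetilde{f_i}=\sum_I a_I\tau^I$ with $a_I\in\widetilde k_{s_i\mathbf r^{-I}}$, the fact that the inverse morphism is also given by homogeneous elements of degrees $r_j$ forces — exactly as in the classical argument for automorphisms of polynomial rings realizing an isomorphism of affine spaces by polynomial maps whose composite both ways is the identity — the "linear part" $A=(a_{e_j})_{i,j}$ to be invertible; reordering the $s_i$ if necessary so that the nonzero homogeneous components land correctly, one gets $(\widetilde{f_i}) = A(\tau_j)+B$ with $A\in\mathrm{GL}(\widetilde k,\mathbf s,\mathbf r)$. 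Existence of a single entry $a_{i,j}\in\widetilde k_{s_ir_j^{-1}}^\times$ along a permutation in $A$ already yields $s_ir_j^{-1}\in|\widetilde k^\times|=|k^\times|$, and since $\det A\in\widetilde k_R^\times$ with $R=\prod s_ir_i^{-1}$ one gets $\prod_i s_ir_{\sigma(i)}^{-1}\in|k^\times|$ compatibly, so after reordering $s_ir_i^{-1}\in|k^\times|$ for all $i$.

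The main obstacle is the "affineness is automatic" step: a priori a graded isomorphism $\widetilde k[\tau]\xrightarrow{\sim}\widetilde k[\upsilon]$ need not send $\tau_i$ to an affine expression, since for a general polyradius the residue algebra has many homogeneous generators. What saves us here is precisely the same mechanism as in Proposition~\ref{pro:système de coordonnées} and in \cite[prop.~3.4]{Ducros12}: the formal inverse power series $g_i$ with $g_i(f_i)=T_i$ converges on a neighbourhood, and comparing the degree-$1$ parts of $\widetilde{g_i}$ and $\widetilde{f_i}$ shows the Jacobian-type linear data is invertible, which forces $\widetilde{f_i}$ itself to have invertible linear part and no obstruction to being put in the normal form of Proposition~\ref{pro:système de coordonnées}; alternatively one argues directly that a degree-preserving $\widetilde k$-algebra automorphism of a graded polynomial ring, pulled back from a genuine analytic isomorphism, must be residually affine. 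Once the normal form $(\widetilde{f_i})=A(\tau_j)+B$ with $A$ invertible is secured, the remaining arithmetic — extracting $s_ir_i^{-1}\in|k^\times|$ from invertibility of $A$ over the graded field $\widetilde k$, using $\widetilde k_t^\times\neq\emptyset\iff t\in|k^\times|$ — is immediate.
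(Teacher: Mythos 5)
Your overall route is the paper's: the easy direction via $f_i=\lambda_iT_i$ and Proposition~\ref{pro:système de coordonnées} is exactly the paper's argument and is fine, and for the direct implication you, like the paper, want to reduce everything to the (non)emptiness of $\mathrm{GL}(\widetilde{k},\mathbf{s},\mathbf{r})$ and read the condition off a nonzero determinant. The problem is the step you yourself single out as the key point. The claim that a graded isomorphism $\widetilde{k}[\tau_1,\dots,\tau_n]\rightarrow\widetilde{k}[\upsilon_1,\dots,\upsilon_n]$ induced by an analytic isomorphism ``must be affine'' is false, and the paper itself exhibits the counterexample: $(T_1,T_2+T_1^2)$ is an isometric automorphism of the bidisc of biradius $(1,1)$ whose reduction $(\tau_1,\tau_2+\tau_1^2)$ is not affine --- this is precisely why the hypothesis \emph{résiduellement affine} appears in Theorem~\ref{thm:forme polydisque}. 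So your fallback sentence (``one argues directly that \dots must be residually affine'') cannot be salvaged, and your repeated appeal to the ``normal form'' $(\widetilde{f_i})=A(\tau_j)+B$ is more than what is true.

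Fortunately you only need the much weaker statement that the matrix $A$ of degree-one coefficients is invertible, but as written you do not prove it: ``comparing the degree-$1$ parts of $\widetilde{g_i}$ and $\widetilde{f_i}$'' ignores the fact that constant terms of $\widetilde{f}$ interact with higher-order terms of $\widetilde{g}$ in the linear part of the composite. The clean argument is the formal chain rule applied to the reductions $F=(\widetilde{f_i})$ and $H$ of $\varphi$ and $\varphi^{-1}$: from $H\circ F=\mathrm{id}$ one gets $DH(F(\tau))\,DF(\tau)=I_n$ in $\widetilde{k}[\tau_1,\dots,\tau_n]$, and evaluating at $\tau=0$ gives $DH(B)\,A=I_n$ with $B=F(0)\in\widetilde{k}_{\mathbf{s}}$ and $DH(B)\in\mathrm{M}(\widetilde{k},\mathbf{r},\mathbf{s})$, so $\det A$ is invertible and $A\in\mathrm{GL}(\widetilde{k},\mathbf{s},\mathbf{r})$ (an $I/I^2$ argument after composing with a graded augmentation would also do). Once this is secured, state the last step precisely: $\det A=\sum_\sigma\pm\prod_i a_{i\sigma(i)}\neq 0$ forces a permutation $\sigma$ with $a_{i\sigma(i)}\neq 0$, hence $s_ir_{\sigma(i)}^{-1}\in|k^\times|$ for every $i$, which is the assertion up to reordering; this is exactly where the paper's row/column-vanishing discussion of when $\mathrm{GL}(\widetilde{k},\mathbf{s},\mathbf{r})$ is empty plugs in.
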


\begin{proof}
L'implication découle de la proposition ci-dessus, l'ensemble 
$$\mathrm{GL}\left(\widetilde{k}, \mathbf{s}, \mathbf{r}\right)$$
 est vide s'il n'est pas vrai que 
\[
\forall i\exists j, s_i r_j^{-1}\in |k^\times| \text{ et } \forall j\exists i, s_i r_j^{-1}\in |k^\times|.
\]
Considérons en effet une matrice $A\in\textrm{M}\left(\widetilde{k}, \mathbf{s}, \mathbf{r}\right)$. Le $j$-ème coefficient de la $i$-ème ligne est un élément de $k_{s_i r_j^{-1}}$ or $r_i^{-1}s_j\not\in |k^\times|$ implique $k_{s_i r_j^{-1}}=\lbrace 0 \rbrace$, ainsi si pour un $i$ il n'existe pas $j$ tel que $s_i r_j^{-1}\in |k^\times|$ la $i$-ème ligne ne comporte que des zéros et la matrice $A$ est de déterminant nul. De même si pour un $j$ il n'existe pas $i$ tel que $s_i r_j^{-1}\in |k^\times|$ la $j$-ème colonne ne comporte que des zéros.

Réciproquement, quitte à réordonner les $s_i$, il existe, pour tout $i\in\lbrace 1,\dots,n \rbrace$, $\lambda_i\in k^\times$ tel que $|\lambda_i|r_i = s_i$, donc si l'on prend un système de coordonnées $(T_1, \dots, T_n)$ induisant un automorphisme de $\mathbb{D}_{\mathbf{r}}$, d'après la proposition ci-dessus, $(\lambda_1 T_1, \dots, \lambda_n T_n)$ est encore un système de coordonnées sur $\mathbb{D}_{\mathbf{r}}$ induisant un isomorphisme vers $\mathbb{D}_{(|\lambda_i| r_i)}$, c'est-à-dire $\mathbb{D}_{\mathbf{s}}$.
\end{proof}

\begin{defi} Nous dirons que l'action d'un groupe $G$ sur un polydisque fermé $\mathbb{D}_\mathbf{r}$ est \emph{résiduellement affine} si l'action induite sur le plan $\widetilde{\mathcal{A}}_\mathbf{r} \simeq \widetilde{k}\left[\tau_1, \dots, \tau_n\right]$ est affine. C'est-à-dire, pour tout $g\in G$ :
$$ g.(\tau_i)= A(\tau_i)+B,$$
où, notant $s_i$ le degré de $g.\tau_i$, $A\in \mathrm{M}(\widetilde{k},\textbf{s}, \textbf{r})$ et $B\in \widetilde{k}_\mathbf{s}$.
\end{defi}

\begin{rema} 
La \hyperref[pro:système de coordonnées]{proposition \ref*{pro:système de coordonnées}} joue le même rôle dans notre preuve que \cite[Proposition 3.4]{Ducros12} et \cite[Proposition 2.18]{Schmidt15} dans leurs papiers. Elle implique en particulier qu'un groupe dont l'action sur $\mathbb{D}_\mathbf{r}$ induit une action par automorphismes \emph{affines} préservant le degré sur $\widetilde{\mathcal{A}}_\mathbf{r}$ agit par automorphismes préservant la norme spectrale sur $\mathbb{D}_\mathbf{r}$. 

Remarquons qu'une action de groupe par automorphismes préservant la norme spectrale sur $\mathbb{D}_\mathbf{r}$ est résiduellement affine quand pour tout $i$,
$$
|k^\times|r_i \cap |k^\times|\mathbf{r}^I \neq \emptyset
$$
implique que toutes les coordonnées de $I$ hormis la $i$-ème soient nulles. Ou, de manière équivalente dès la dimension deux, quand la famille $(r_i)$ est $\mathbb{Z}$-linéairement indépendante dans $\mathbb{R}_+^\times$ modulo $|k^\times|$.

Cela découle de la forme de $\widetilde{\mathcal{A}_{\mathbf{r}}}$, en effet c'est un fait classique en dimension un (l'ordre fini des éléments de $\Gal(\widetilde{L}/\widetilde{k})$ impliquant que le degré du polynôme $g.\widetilde{T}$ est $1$) et, en dimension supérieure, si les $r_i$ sont $\mathbb{Z}$-linéairement indépendants dans $\mathbb{R}_+^\times$ modulo $|k^\times|$, $\tau_i$ et $\tau_j$ ne peuvent apparaître dans la même somme homogène lorsque $i\neq j$, donc $\widetilde{\mathcal{A}_\mathbf{r}}=\widetilde{\mathcal{A}_{(r_1)}}\otimes_{\widetilde{k}} \widetilde{\mathcal{A}_{(r_2)}} \otimes_{\widetilde{k}} \dots \otimes_{\widetilde{k}} \widetilde{\mathcal{A}_{(r_n)}}$. Un automorphisme préservant la norme spectrale fixera alors $\widetilde{\mathcal{A}_{(r_i)}}$ pour tout $i$.

Il est intéréssant de noter que quand les $r_i$ sont $\mathbb{Z}$-linéairement indépendants dans $\mathbb{R}_+^\times$ modulo $|k^\times|$ et qu'aucun des $r_i$ n'est de torsion modulo $|k^\times|$, l'algèbre $\widetilde{\mathcal{A}_\mathbf{r}}$ est locale et qu'à cette proposition près la preuve d'Antoine Ducros dans \cite{Ducros12} marche directement. Il est amusant  qu'un polydisque fermé de polyrayon suffisamment « éloigné » de $|k^\times|^n$ (en dimension un c'est un disque qui a pour bord un point de type $3$) semble ainsi se comporter comme un polydisque ouvert.

En toute généralité le même argument ne peut marcher dès la dimension deux : prenons le bidisque de birayon $(1,1)$ sur un corps ultramétrique complet quelconque. La famille $(T_1, T_2+T_1^2)$ induit un automorphisme isométrique du bidisque – dont l'inverse est donné par $(T_1, T_2-T_1^2)$ – et a pour réduction $(\tau_1,\tau_2+\tau_1^2)$.
\end{rema}

\subsection{Deux lemmes d'extension des scalaires}

Il nous sera pratique d'avoir présenté sous la forme des deux lemmes ci-dessous des résultats établis au début de la preuve de \cite[Théorème 3.5]{Ducros12}. Nous en redonnons les démonstrations.

\begin{lemm}
\label{lemm:Extension des scalaires} 
Soit $k$ un corps ultramétrique complet et $X$ un espace $k$-analytique. Soit $L$ une extension finie de $k$. Notons $\mathcal{A}$ l'algèbre des fonctions analytiques sur $X$ et $\mathcal{B}$ l'algèbre des fonctions analytiques sur $X_L$. 
La flèche naturelle
\[
  \mathcal{A}\otimes_k L \rightarrow \mathcal{B} 
\]
est un isomorphisme. 
\end{lemm}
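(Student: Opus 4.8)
L'idée est de se ramener au cas affinoïde, où l'énoncé est immédiat précisément parce que $L/k$ est \emph{finie}, puis de propager le résultat à un $X$ quelconque par recollement, en utilisant que le foncteur $-\otimes_k L$ se comporte bien (c'est, à isomorphisme de $k$-modules près, une puissance finie).

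Je traiterais d'abord le cas où $X=\mathcal{M}(\mathcal{A})$ est $k$-affinoïde. Alors $\mathcal{A}=\mathcal{O}_X(X)$ et $X_L=\mathcal{M}(\mathcal{A}\,\widehat{\otimes}_k\,L)$, de sorte que $\mathcal{B}=\mathcal{A}\,\widehat{\otimes}_k\,L$. Comme $L$ est un $k$-espace vectoriel de dimension finie et que $k$ est complet, $L$ est complet et sa topologie est la topologie produit ; donc $\mathcal{A}\otimes_k L$, qui est $k$-linéairement isomorphe à $\mathcal{A}^{[L:k]}$, est déjà un $k$-module de Banach et $\mathcal{A}\,\widehat{\otimes}_k\,L=\mathcal{A}\otimes_k L$. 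La flèche naturelle $\mathcal{A}\otimes_k L\to\mathcal{B}$ est alors l'identité. C'est la seule étape où intervient la finitude de $L/k$.

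Pour un $X$ quelconque, je choisirais un $G$-recouvrement $X=\bigcup_i U_i$ par des domaines $k$-affinoïdes, et, pour chaque couple $(i,j)$, un $G$-recouvrement $U_i\cap U_j=\bigcup_k W_{ijk}$ par des domaines $k$-affinoïdes. Comme $\mathcal{O}_X$ est un faisceau pour la $G$-topologie, $\mathcal{A}$ est l'égalisateur
\[
  \mathcal{A}\;=\;\ker\Bigl(\textstyle\prod_i\mathcal{O}_X(U_i)\;\rightrightarrows\;\prod_{i,j,k}\mathcal{O}_X(W_{ijk})\Bigr).
\]
L'extension des scalaires commute aux produits fibrés et envoie les $G$-recouvrements sur des $G$-recouvrements ; donc $\{(U_i)_L\}$ est un $G$-recouvrement de $X_L$, $(U_i)_L\cap(U_j)_L=(U_i\cap U_j)_L$ est recouvert par les $(W_{ijk})_L$, et le même argument de faisceau donne
\[
  \mathcal{B}\;=\;\ker\Bigl(\textstyle\prod_i\mathcal{O}_{X_L}\!\bigl((U_i)_L\bigr)\;\rightrightarrows\;\prod_{i,j,k}\mathcal{O}_{X_L}\!\bigl((W_{ijk})_L\bigr)\Bigr).
\]
D'après le cas affinoïde, $\mathcal{O}_{X_L}((U_i)_L)=\mathcal{O}_X(U_i)\otimes_k L$, de même pour les $W_{ijk}$, compatiblement aux flèches de restriction.

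Enfin, comme $L$ est libre de rang fini $d:=[L:k]$ sur $k$, le foncteur $-\otimes_k L$ sur les $k$-modules est isomorphe à $(-)^{\oplus d}$, donc exact et commutant aux produits quelconques ; il envoie ainsi le premier égalisateur sur le second :
\[
  \mathcal{A}\otimes_k L\;=\;\ker\Bigl(\textstyle\prod_i\mathcal{O}_X(U_i)\otimes_k L\;\rightrightarrows\;\prod_{i,j,k}\mathcal{O}_X(W_{ijk})\otimes_k L\Bigr)\;=\;\mathcal{B},
\]
et une chasse au diagramme montre que cet isomorphisme n'est autre que la flèche naturelle. L'unique contenu mathématique est donc concentré dans le cas affinoïde (la complétude de $\mathcal{A}\otimes_k L$) ; la seule chose demandant un peu de soin dans le cas général est la compatibilité avec la $G$-topologie — vérifier qu'un $G$-recouvrement affinoïde de $X$, muni de $G$-recouvrements des intersections, s'étend en une donnée de même nature sur $X_L$, et que la flèche naturelle est compatible aux présentations par égalisateurs obtenues. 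C'est standard dans le formalisme de Berkovich, mais mérite d'être explicité.
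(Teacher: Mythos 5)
Votre démonstration est correcte et suit essentiellement la même démarche que celle du texte : réduction au cas affinoïde, où $\mathcal{A}\,\widehat{\otimes}_k\,L=\mathcal{A}\otimes_k L$ par finitude de $L/k$, puis recollement via la propriété de faisceau pour la $G$-topologie, le foncteur $-\otimes_k L\simeq(-)^{\oplus d}$ étant exact et commutant aux produits donc aux égalisateurs en jeu. Vous ne faites qu'expliciter l'étape de recollement que la preuve du texte résume par \enquote{il s'ensuit}.
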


\begin{proof}
Supposons $L$ une extension finie de $k$. Choisissons une base $(l_1, \dots,l_n)$ de $L$ sur $k$. Si $V$ est un domaine affinoïde de $X$ d'algèbre $\mathcal{A}_V$ alors $V_L$ est un domaine affinoïde de $X_L$ d'algèbre $\mathcal{A}_V\otimes_k L\simeq \oplus \mathcal{A}_V.l_i$; il s'ensuit que la flèche naturelle 
\[
  \mathcal{A}\otimes_k L=\oplus \mathcal{A}.l_i\rightarrow\mathcal{B}
\]
est un isomorphisme.
\end{proof}

\begin{lemm}
\label{lemm:Extension des scalaires, réduction graduée, norme spectrale}
Soit $k$ un corps ultramétrique complet et $X$ un espace $k$-analytique compact. Soit $L$ une extension modérément ramifiée de $k$. Notons $\mathcal{A}$ l'algèbre des fonctions analytiques sur $X$ et $\mathcal{B}$ l'algèbre des fonctions analytiques sur $X_L$, que nous munissons de la norme spectrale, et $\widetilde{\mathcal{A}}$, $\widetilde{\mathcal{B}}$ leurs réductions pour cette norme.
Le morphisme canonique 
\[
\widetilde{\mathcal{A}}\otimes_{\widetilde{k}} \widetilde{L} \rightarrow \widetilde{\mathcal{B}}
\]
est un isomorphisme.
\end{lemm}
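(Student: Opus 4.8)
The plan is to reduce everything to the affinoid case and then to a statement about reductions of affinoid algebras that can be found in Temkin's work. First I would invoke Lemma \ref{lemm:Extension des scalaires}: the natural map $\mathcal{A}\otimes_k L \to \mathcal{B}$ is an isomorphism of $k$-algebras (here $L/k$ finite is the only hypothesis used). The issue is that this is an \emph{algebraic} isomorphism, and we must understand how the spectral norm on $\mathcal{B}$ relates to the one on $\mathcal{A}$; concretely, we want that the spectral seminorm of $\mathcal{B}$, restricted via this identification, makes $\mathcal{B}$ into a "tame" extension of $(\mathcal{A},\Vert\cdot\Vert_\infty)$ in the sense that passing to graded reductions commutes with $\otimes_{\widetilde k}\widetilde L$.

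The key step is the local (affinoid) statement. Since $X$ is compact, cover it by finitely many affinoid domains $V$; the spectral norm on $\mathcal{A}$ is the sup of the spectral norms of the $\mathcal{A}_V$, and similarly for $\mathcal{B}$ and the $(V_L)$, which by Lemma \ref{lemm:Extension des scalaires} have algebras $\mathcal{A}_V\otimes_k L$. So it suffices to treat $X=\mathcal{M}(\mathcal{A}_V)$ affinoid, i.e. to show that for an affinoid $k$-algebra $\mathcal{A}_V$ with its spectral seminorm and $L/k$ finite \emph{tamely ramified}, the canonical map $\widetilde{\mathcal{A}_V}\otimes_{\widetilde k}\widetilde L \to \widetilde{\mathcal{A}_V\hat\otimes_k L}$ is an isomorphism of graded $\widetilde k$-algebras. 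This is exactly the sort of statement for which one cites \cite[Proposition 3.1]{Temkin04} (or the corresponding compatibility of graded reduction with finite tame base change discussed in \cite{Temkin04} and recalled in \cite{Ducros12}); surjectivity is elementary (the image contains $\widetilde{\mathcal{A}_V}$ and all of $\widetilde L$, and $L$ is spanned over $k$ by elements whose spectral norms are controlled), while injectivity is where tame ramification is genuinely used — it guarantees that the residue extension $\widetilde L/\widetilde k$ is "large enough" (separable, with the ramification part contributing only units in the value group) so that a $\widetilde k$-basis of $\widetilde L$ made of homogeneous elements remains $\widetilde{\mathcal{A}_V}$-linearly independent after reduction, with no collapse of degrees.

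Having the affinoid case, I would glue: choosing a finite affinoid cover $(V_i)$ of $X$, one has $\widetilde{\mathcal A}\hookrightarrow\prod_i\widetilde{\mathcal A_{V_i}}$ and $\widetilde{\mathcal B}\hookrightarrow\prod_i\widetilde{\mathcal B_{V_i}}$ (reduction of a spectral seminorm is compatible with finite products/sup of seminorms), the affinoid case identifies each $\widetilde{\mathcal A_{V_i}}\otimes_{\widetilde k}\widetilde L\xrightarrow{\sim}\widetilde{\mathcal B_{V_i}}$, and since $\widetilde L$ is a finite free $\widetilde k$-module the functor $-\otimes_{\widetilde k}\widetilde L$ is exact and commutes with the finite products, so a diagram chase gives that $\widetilde{\mathcal A}\otimes_{\widetilde k}\widetilde L\to\widetilde{\mathcal B}$ is an isomorphism. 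The main obstacle is the affinoid injectivity statement: one must be careful that the spectral (not just the given Banach) seminorm behaves well under tame base change, and that the graded reduction does not lose information — this is precisely the content of Temkin's graded reduction theory and the tameness hypothesis, and it is the only place in the argument that is not a formal manipulation.
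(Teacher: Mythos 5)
The direction you take (reduce everything to the statement that graded reduction commutes with the tame base change $-\otimes_{\widetilde k}\widetilde L$) is the right one, but the load-bearing step is missing. Your affinoid claim — for $\mathcal{A}_V$ affinoid and $L/k$ tame, $\widetilde{\mathcal{A}_V}\otimes_{\widetilde k}\widetilde L \to \widetilde{\mathcal{A}_V\widehat\otimes_k L}$ is an isomorphism — \emph{is} the lemma in the affinoid case, and the reference you lean on does not contain it: \cite[Proposition 3.1]{Temkin04} computes the graded reduction of relative polydisc and Laurent algebras over a fixed base, and says nothing about ground field extension. Moreover your reason for surjectivity (``the image contains $\widetilde{\mathcal{A}_V}$ and all of $\widetilde L$'') is not a proof: the issue is not which subrings the image contains, but whether for $b=\sum_i f_i l_i$ one has $\widetilde b=\sum_i \widetilde{f_i}\,\widetilde{l_i}$, i.e.\ the orthogonality $\Vert\sum_i f_i l_i\Vert_\infty=\max_i \Vert f_i\Vert_\infty\,|l_i|$. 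Without it, cancellations can make $\Vert b\Vert_\infty$ strictly smaller than $\max_i\Vert f_i\Vert_\infty|l_i|$ and $\widetilde b$ need not lie in the image; this orthogonality is where tameness enters, for surjectivity exactly as much as for injectivity.

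The gluing step also has a gap: injectivity does pass through $\prod_i\widetilde{\mathcal{A}_{V_i}}$ as you say, but surjectivity does not follow by a diagram chase, because graded reduction does not turn the Tate equalizer $0\to\mathcal A\to\prod_i\mathcal A_{V_i}\rightrightarrows\prod_{i,j}\mathcal A_{V_i\cap V_j}$ into an equalizer, so knowing the isomorphism on each $V_i$ plus injectivity into the product does not identify the image of $\widetilde{\mathcal A}\otimes_{\widetilde k}\widetilde L$ inside $\prod_i\widetilde{\mathcal B_{V_{i,L}}}$. In fact no affinoid reduction is needed: the paper argues pointwise. Choose $l_1,\dots,l_d\in L^\times$ lifting a homogeneous basis of $\widetilde L$ over $\widetilde k$; by \cite[\S 2.21 et Lemme 2.3]{Ducros12}, tameness guarantees that for every complete extension $\mathbb{K}$ of $k$ — in particular $\mathbb{K}=\mathcal{H}(x)$ for $x\in X$ — the $l_i$ stay orthogonal in $\mathbb{K}\otimes_k L$, i.e.\ $\Vert\sum_i\lambda_i l_i\Vert=\max_i|\lambda_i|\,|l_i|$; taking the supremum over the points of $X$ yields $\Vert\sum_i f_i l_i\Vert_\infty=\max_i\Vert f_i\Vert_\infty|l_i|$ on $\mathcal B=\oplus_i\mathcal A\, l_i$, and the isomorphism is then read off degree by degree. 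If you insist on the affinoid route, you must prove this quantitative orthogonality on each $V_i$ (it is not a formal consequence of an abstract isomorphism statement), after which the passage to $X$ is just a supremum over the finite cover rather than an exactness argument.
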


\begin{proof}
Choisissons un $d$-uplet $(l_1,\dots, l_d)$ d'éléments de $L^\times$ tel que $(\widetilde{l_1},\dots,\widetilde{l_d})$ soit une base de $\widetilde{L}$ sur $\widetilde{k}$. Soit $\mathbb{K}$ une extension ultramétrique complète quelconque de $k$. Comme $L$ est une extension modérément ramifiée de $k$, la famille $(\widetilde{l_1},\dots,\widetilde{l_n})$ est d'après \cite[§2.21]{Ducros12} encore une base de $\widetilde{(\mathbb{K}\otimes_k L)}$ sur $\widetilde{k}$. Si $(\lambda_1, \dots, \lambda_n)\in\mathbb{K}^n$, \cite[Lemme 2.3]{Ducros12} assure alors que 
\[ 
  ||\underset{ \in \mathbb{K}\otimes_k L } { \underbrace{\sum\lambda_i l_i}}|| = \max |\lambda_i|.|l_i|.
\]
Donc, par la définition de la norme spectrale d'une fonction analytique, on a pour tout $n$-uplet $(f_1,\dots, f_n)$ d'éléments de $\mathcal{A}$, l'égalité
\[ 
  ||\sum \underset{\in \mathcal{B}} { \underbrace {f_i.l_i} }||_\infty = \max ||f_i||_\infty |l_i|.
\]
On en déduit l'égalité des sommandes $\widetilde{\mathcal{B}}_\rho = \oplus_i \widetilde{L}_{\frac{\rho}{|l_i|}}\widetilde{l_i}$ et donc que le morphisme canonique $\widetilde{\mathcal{A}} \otimes_{\widetilde{k}} \widetilde{L} \rightarrow \widetilde{\mathcal{B}}$ est un isomorphisme.
\end{proof}

\begin{rema}
Soit $x$ un point de $X$ dont la fibre dans $X_L$ est réduite à un point $x_L$. Si l'on note $||.||_x$ et $||.||_{x_L}$ les semi-normes correspondantes sur $\mathcal{A}$ et $\mathcal{B}$. On a pour tout $n$-uplet $(f_1,\dots, f_n)$ d'éléments de $\mathcal{A}$, l'égalité
\[ 
  ||\sum \underset{\in \mathcal{B}} { \underbrace {f_i.l_i} }||_{x_L} = \max ||f_i||_{x} |l_i|.
\]
Donc si l'on note $\widetilde{\mathcal{A}}^x$, $\widetilde{\mathcal{B}}^{x_L}$ les réductions pour ces semi-normes. Le morphisme canonique 
\[
\widetilde{\mathcal{A}}^x\otimes_{\widetilde{k}} \widetilde{L} \rightarrow \widetilde{\mathcal{B}}^{x_L}
\]
est un isomorphisme.
\end{rema}

\subsection{Résultat}

\begin{theo}
\label{thm:forme polydisque} 
Soit $k$ un corps ultramétrique complet, soit $X$ un espace $k$-analytique et soit $L$ une extension galoisienne finie et modérément ramifiée de $k$. Alors $X_L$ est un $L$-polydisque fermé de polyrayon \(\mathbf{r}=(r_1,\dots,r_n)\) sur lequel l'action de Galois est résiduellement affine si et seulement si $X$ est un $k$-polydisque fermé de polyrayon $\mathbf{s}=(s_1, ..., s_n)$, avec pour tout $i\in \lbrace 1, \dots, n \rbrace$
\[
  |L^\times|r_i = |L^\times| s_i.
\]
\end{theo}

\begin{proof} 
Si $X$ est un $k$-polydisque fermé le résultat est immédiat.

Réciproquement. On désigne par $T_1, \dots, T_n$ un système de fonctions coordonnées sur $X_L$. Soit $\mathcal{A}$ (resp. $\mathcal{B}$) l'algèbre des fonctions analytiques sur $X$ (resp. $X_L$) que nous munissons de la norme spectrale, $||.||_\infty$.

Il existe (\hyperref[lemm:réduction de l'algèbre des fonctions d'un polydisque fermé]{lemme \ref*{lemm:réduction de l'algèbre des fonctions d'un polydisque fermé}}) un isomorphisme entre $\widetilde{\mathcal{B}}$ et $\widetilde{L}\left[\tau_1, ..., \tau_n\right]$ modulo lequel $\widetilde{T_i} =\tau_i$ est de degré $r_i$, pour tout $i\in \lbrace 1, \dots, n \rbrace$.

\begin{lemm} 
\label{lem:forme polydisque fermé} 
Quitte à remplacer $(T)_{i=1...n}$ par un autre système de coordonnées on peut supposer les $\tau_i$ invariants sous l'action de $\Gal(\widetilde{L}/\widetilde{k})$. 
\end{lemm}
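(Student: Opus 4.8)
Le plan est de concentrer l'obstruction à rendre les $\tau_i$ invariants en deux problèmes de Hilbert $90$ gradués — un multiplicatif, un additif —, que l'on sait résoudre grâce à la \hyperref[pro:Hilbert 90 multiplicatif]{proposition \ref*{pro:Hilbert 90 multiplicatif}} et à la \hyperref[pro:Hilbert 90 gradué additif]{proposition \ref*{pro:Hilbert 90 gradué additif}}. Je commencerais par rappeler que, $L/k$ étant modérément ramifiée, $\widetilde{L}/\widetilde{k}$ est une extension galoisienne de corps gradués dont le groupe de Galois s'identifie à $G := \Gal(L/k)$, et que l'action de $G$ sur $\mathcal{B}$ est isométrique pour la norme spectrale ; elle induit donc une action sur $\widetilde{\mathcal{B}} \simeq \widetilde{L}[\tau_1, \dots, \tau_n]$ par automorphismes préservant le degré et prolongeant l'action standard sur $\widetilde{L}$. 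L'hypothèse faite sur l'action donne alors, pour tout $g \in G$, une écriture $g \cdot (\tau_i)_i = A_g (\tau_j)_j + B_g$ avec $A_g \in \mathrm{GL}(\widetilde{L}, \mathbf{r})$ — car la partie linéaire d'un automorphisme affine de $\widetilde{L}[\tau]$ est inversible — et $B_g \in \widetilde{L}_{\mathbf{r}}$ homogène de degré $1$. En composant les actions, l'identité $g \cdot (h \cdot (\tau_i)) = (gh) \cdot (\tau_i)$ se traduit par $A_{gh} = (g \cdot A_h) A_g$ : autrement dit $A_\bullet \in \mathrm{Z}^1(G, \mathrm{GL}(\widetilde{L}, \mathbf{r}))$ pour la convention de cocycle de la \hyperref[pro:Hilbert 90 multiplicatif]{proposition \ref*{pro:Hilbert 90 multiplicatif}}.

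J'appliquerais ensuite cette proposition : il existe $\mathbf{s} = (s_1, \dots, s_n)$ et $P \in \mathrm{GL}(\widetilde{L}, \mathbf{r}, \mathbf{s})$ tels que $A_g = (g \cdot P) P^{-1}$ pour tout $g$. Posant $(\tau_i')_i := P^{-1} (\tau_j)_j$, vecteur homogène de degré $1$ de $\widetilde{L}_{\mathbf{s}}$ — donc $\tau_i'$ est un élément homogène de $\widetilde{\mathcal{B}}$ de degré $s_i$ —, un calcul direct donne $g \cdot (\tau_i')_i = (\tau_i')_i + C_g$ où $C_g := (g \cdot P)^{-1} B_g$ est, comme le confirme un suivi des degrés, un vecteur homogène de degré $1$ de $\widetilde{L}_{\mathbf{s}}$, et où $g \mapsto C_g$ vérifie $C_{gh} = C_g + g \cdot C_h$. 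Coordonnée par coordonnée, $g \mapsto (C_g)_i$ est donc un cocycle de $\mathrm{Z}^1(G, \widetilde{L}_{s_i})$, trivial d'après la \hyperref[pro:Hilbert 90 gradué additif]{proposition \ref*{pro:Hilbert 90 gradué additif}} : il existe $d_i \in \widetilde{L}_{s_i}$ tel que $(C_g)_i = g \cdot d_i - d_i$. Les $\tau_i'' := \tau_i' - d_i$ sont alors invariants sous $G$ ; ils se déduisent des $\tau_j$ par la transformation affine inversible $(\tau_j)_j = P\,(\tau_i'' + d_i)_i$, de sorte que $\widetilde{\mathcal{B}} = \widetilde{L}[\tau_1'', \dots, \tau_n'']$ et que chaque $\tau_i''$ est un élément homogène non nul de degré $s_i$.

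Il resterait à relever les $\tau_i''$ en un système de coordonnées. Le coefficient $(P^{-1})_{ij}$ étant homogène de degré $s_i r_j^{-1}$ dans $\widetilde{L}$, s'il est non nul alors $s_i r_j^{-1} \in |L^\times|$ et l'on peut choisir $\lambda_{ij} \in L$ de valeur absolue $s_i r_j^{-1}$ et de réduction $(P^{-1})_{ij}$, sinon on pose $\lambda_{ij} = 0$ ; de même on relève $d_i$ en $\gamma_i \in L$. La fonction $T_i'' := \sum_j \lambda_{ij} T_j - \gamma_i$ sur $X_L$ est alors de norme spectrale $s_i$ et de réduction $\widetilde{T_i''} = \sum_j (P^{-1})_{ij} \tau_j - d_i = \tau_i''$ (il n'y a pas de compensation puisque $\tau_i'' \neq 0$). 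Ainsi $(\widetilde{T_i''})_i = P^{-1} (\tau_j)_j - (d_i)_i$ est de la forme $A (\tau_j)_j + B$ avec $A = P^{-1} \in \mathrm{GL}(\widetilde{L}, \mathbf{s}, \mathbf{r})$ et $B = -(d_i)_i \in \widetilde{L}_{\mathbf{s}}$, et la \hyperref[pro:système de coordonnées]{proposition \ref*{pro:système de coordonnées}} assure que $(T_i'')_i$ est un système de coordonnées sur $X_L$ (induisant un isomorphisme $X_L \simeq \mathbb{D}_{\mathbf{s}}$) ; ses réductions $\tau_i''$ sont invariantes sous $\Gal(\widetilde{L}/\widetilde{k})$ par construction, ce qui conclurait. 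On pourrait tout aussi bien remarquer que les $\tau_i''$, étant invariants, appartiennent à $(\widetilde{\mathcal{B}})^G = \widetilde{\mathcal{A}}$ (\hyperref[lemm:Extension des scalaires, réduction graduée, norme spectrale]{lemme \ref*{lemm:Extension des scalaires, réduction graduée, norme spectrale}}), et les relever directement dans $\mathcal{A}$.

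Le point véritablement substantiel est la mise en forme de l'obstruction comme ce couple de cocycles — l'un à valeurs dans $\mathrm{GL}(\widetilde{L}, \mathbf{r})$, l'autre additif —, que l'on sait trivialiser grâce à la \hyperref[sec:Algèbre linéaire homogène]{première section}. Les difficultés attendues sont annexes : suivre correctement les graduations (notamment vérifier que $C_g$ est un vrai vecteur homogène) et respecter la convention de cocycle non abélien de la \hyperref[pro:Hilbert 90 multiplicatif]{proposition \ref*{pro:Hilbert 90 multiplicatif}}. C'est exactement cette hypothèse — que l'action soit résiduellement affine — qui autorise l'écriture affine de $g \cdot (\tau_i)$, et donc toute la stratégie ; sans elle, $g \cdot \tau_i$ pourrait être un automorphisme polynomial de degré supérieur et l'argument s'effondrerait.
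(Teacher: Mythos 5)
Votre démonstration est correcte et suit essentiellement la même stratégie que celle du papier : écriture affine $g.(\tau_i)=A_g(\tau_j)+B_g$, trivialisation de la partie linéaire par le Hilbert~90 multiplicatif gradué (proposition \ref{pro:Hilbert 90 multiplicatif}), puis de la partie de translation par le Hilbert~90 additif (proposition \ref{pro:Hilbert 90 gradué additif}), et relèvement des nouveaux $\tau_i$ en un système de coordonnées via la proposition \ref{pro:système de coordonnées}. Vos seuls ajouts sont des explicitations bienvenues (calcul de $C_g=(g\cdot P)^{-1}B_g$, vérification de l'absence de compensation lors du relèvement), et votre relation de cocycle $A_{gh}=(g\cdot A_h)A_g$ est bien celle qui correspond à la convention de cobord $(\sigma\cdot P)P^{-1}$ de la proposition \ref{pro:Hilbert 90 multiplicatif}.
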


\begin{proof} Par notre hypothèse sur l'action de Galois, pour tout $g\in \Gal\left(\widetilde{L}/\widetilde{k}\right)$:
\[
  g.(\tau)_{i=1..n} = A_g (\tau_i )_{i=1..n} +B_g
\]
avec
\begin{itemize}
\item \(A_g=(a_{ij,g})_{i,j=1..n} \in \mathrm{GL}\left(\widetilde{L},\mathbf{r}\right)\);
\item \(B_g=(b_{i,g})_{i=1..n}\in \widetilde{L}_{\mathbf{r}}\).
\end{itemize}
Considérons d'abord seulement les termes de degré $1$. On obtient l'identité:
\[
  A_{gh} = A_g \left(g.A_h\right).
\]
D'après notre variante du théorème 90 de Hilbert (\hyperref[pro:Hilbert 90 multiplicatif]{proposition \ref*{pro:Hilbert 90 multiplicatif}}), il existe  $\mathbf{s}=(s_i)_{i=1..n}$ dans $\mathbb{R}^n$ et $P$ dans $\rm{GL}\left(\widetilde{L},\mathbf{r},\mathbf{s}\right)$ telle que
$A_g = (g.P)P^{-1}$ pour tout $g\in \Gal(\widetilde{L}/\widetilde{k})$.

Pour tout $g\in \Gal(\widetilde{L}/\widetilde{k})$
\[
  g.(P^{-1} \left(\tau_i\right)_{i=1..n})= P^{-1} \left(\tau_i\right)_{i=1..n}+ B_g',
\]
avec $B_g'\in \widetilde{L}_{\mathbf{s}}$. Quitte à remplacer $(\tau_i)_{i=1..n}$ par $P^{-1} \left(\tau_i\right)_{i=1..n}$ on peut écrire pour tout $g \in  \Gal\left(\widetilde{L}/\widetilde{k}\right)$:
\[
  g.(\tau_i)_{i=1..n} = (\tau_i)_{i=1..n} + B_g,
\]
avec $B_g\in \widetilde{L}_{(s_i)}$. Soit coordonnée par coordonnée:
\[
  g.\tau_i= \tau_i + b_{i,g},
\]
où $b_{i,g}\in \widetilde{L}_{s_i}$. Fixons $i$, soient $g$ et $h$ deux éléments de $\Gal(\widetilde{L}/\widetilde{k})$; via les égalités
\[
  (gh).\tau_i=\tau_i+b_{i, gh}
\]
\[
  g.(h.\tau_i)=g.(\tau_i +b_{i,h})=\tau_i+b_{i,g}+g.b_{i, h}
\]
on obtient l'identité
\[
  b_{i, gh}=b_{i,g}+g(b_{i, h}).
\]
Ainsi, pour tout $i$, l'application $g\mapsto b_{i,g}$ est un cocycle additif de ${\rm H}^1\left(G, \widetilde{L}_{s_i}\right)$, et d'après la version graduée additive du théorème $90$ de Hilbert (\hyperref[pro:Hilbert 90 gradué additif]{proposition \ref*{pro:Hilbert 90 gradué additif}}), est un cobord, c'est-à-dire qu'il existe $\widetilde{\mu_i}$ dans $\widetilde{L}_{|{\lambda}^{-1}|R}$ tel que $b_{i,g}=g.\widetilde{\mu_i}-\widetilde{\mu_i}$ pour tout $g\in \Gal(\widetilde{L}/\widetilde{k})$.

Donc les $\tau_i$ peuvent être supposés invariants sous Galois, quitte à les translater par $\mu_i$. Il suffit alors de remplacer les $T_i$ par les antécédents de ces nouveaux $\tau_i$.
\end{proof}

Nous avons établi qu'il existe un isomorphisme entre $\widetilde{\mathcal{A}}\otimes_{\widetilde{k}}\widetilde{L}$, puisque isomorphe à $\widetilde{\mathcal{B}}$ (\hyperref[lemm:Extension des scalaires, réduction graduée, norme spectrale]{lemme \ref{lemm:Extension des scalaires, réduction graduée, norme spectrale}}), et
\[
  \widetilde{L}\left[\tau_1, ..., \tau_n\right],
\]
modulo lequel, pour tout $i$, $\widetilde{T_i} =\tau_i$ est de degré $s_i$. Comme $\widetilde{\mathcal{A}}= \left( \widetilde{\mathcal{A}}\otimes_{\widetilde{k}} \widetilde{L}\right)^{\Gal(\widetilde{L}/\widetilde{k})}$ on en déduit que $\widetilde{\mathcal{A}}$ est isomorphe à $\widetilde{k}\left[\tau_1, ..., \tau_n\right]$.

Il existe donc une famille de fonctions analytiques $(f_1, ..., f_n)$ dans $\mathcal{A}$ ayant pour réductions $(\tau_1, ..., \tau_n)$; en particulier, les $f_i$ sont de rayon spectral $s_i$. Soit $Y$ le $k$-polydisque de polyrayon  $\mathbf{s}$ et soit $\varphi$ le morphisme $X\rightarrow  Y$ induit par les $f_i$; il résulte de la \hyperref[pro:système de coordonnées]{proposition \ref*{pro:système de coordonnées}} que $\varphi_L : X_L \rightarrow Y_L$ est un isomorphisme.

Comme être un isomorphisme descend par extension quelconque des scalaires \cite[voir][Theorem 9.2]{Conrad10}, $\varphi$ est un isomorphisme.

De plus $X_L$ est un $L$-polydisque à la fois de type $\mathbf{r}$ et de type $\mathbf{s}$, donc, d'après le \hyperref[coro:polydisques isomorphes et type]{corollaire \ref*{coro:polydisques isomorphes et type}} quitte à réordonner les $s_i$, pour tout $i\in\lbrace 1, \dots, n \rbrace$,
\[
|L^\times|r_i = |L^\times|s_i. \qedhere
\]
\end{proof}

\section{Formes de dentelles}
\label{sec:polycouronnes}
On note $k$ un corps ultramétrique complet.

On note $T_1, \dots, T_n$ des coordonnées sur $\mathbb{G}_{m}^{n,\an}$. 

On note $\rho : x\mapsto \eta_{(|T_1(x)|,\dots,|T_n(x)|)}$ la rétraction de $\mathbb{G}_{m}^{n,\an}$ sur son squelette $S(\mathbb{G}_{m}^{n,\an})$.

\begin{nota}
Soit $U$ une partie de $(\mathbb{R}_+^\times)^n$, nous noterons
$$\eta_U =\lbrace \eta_{\mathbf{r}}\in \mathbb{G}_{m}^{n,\an}, \mathbf{r}\in U \rbrace ;$$ 
$$\mathbb{D}_U = \rho^{-1}(\eta_U) = \lbrace x\in \mathbb{G}_{m}^{n,\an}\, ,\, (|T_1(x)|,\dots,|T_n(x)|)\in U \rbrace.$$
\end{nota}

Nous considérons $(\mathbb{R}_+^\times)$ comme un espace vectoriel réel, la loi interne étant donnée par sa structure de groupe abélien et la loi externe par l'exponentiation coordonnée par coordonnée.

\begin{defi}
On note $\textrm{Aff}_{\mathbb{Z}}((\mathbb{R_+^\times})^n)$ l'ensemble des applications affines de $(\mathbb{R}_+^\times)^n$ vers $\mathbb{R}_+^\times$ de la forme 
$$(t_1,\dots, t_n)\mapsto r \prod t_i^{a_i}$$
où les $a_i$ appartiennent à $\mathbb{Z}$ et où $r\in \mathbb{R}_+^\times$.

Un \emph{$\mathbb{Z}$-polytope} de $(\mathbb{R}^\times_+)^n$ est une partie compacte de $(\mathbb{R}^\times_+)^n$ définie par une condition de la forme :
$$ 
\bigvee_{i\in I} \bigwedge_{j\in J} \varphi_{i,j}\leq 1
$$
où $I$ et $J$ sont des ensembles finis d'indices et où les $\varphi_{i,j}$ appartiennent à $\textrm{Aff}_{\mathbb{Z}}((\mathbb{R_+^\times})^n)$.
\end{defi}

\begin{defi}
Soit $X$ un espace topologique séparé et localement compact. Soit $Y$ une partie de $X$ et soit $(Y_i)$ une famille de sous-ensembles de $Y$. On dit que la famille $(Y_i)$ est un \emph{$G$-recouvrement} de $Y$ si tout point $y$ de $Y$ possède un voisinage dans $Y$ de la forme $U_{i\in I} Y_i$ où $I$ est un ensemble fini d'indices et où $y\in\cap_{i\in I} Y_i$.
\end{defi}

\begin{defi}
Une \emph{partie $\mathbb{Z}$-linéaire par morceaux de $(\mathbb{R}_+^\times)^n$} est une partie de $(\mathbb{R}_+^\times)^n$ qui admet un $G$-recouvrement par une famille de $\mathbb{Z}$-polytopes de $(\mathbb{R}^\times_+)^n$.
\end{defi}

\begin{rema} 
Soit $U$ un $\mathbb{Z}$-polytope non-vide convexe, c'est-à-dire non vide et défini par une condition de la forme :
$$ 
\bigwedge_{i=1..m} \varphi_{i}\leq 1
$$
où les $\varphi_{i}: \mathbf{t}\mapsto r_i \mathbf{t}^\mathbf{a_i}$ appartiennent à $\textrm{Aff}_{\mathbb{Z}}((\mathbb{R_+^\times})^n)$. Alors $\mathbb{D}_U$ est un domaine $k$-affinoïde de $\mathbb{G}_{m}^{n,\an}$. Pour le voir : noter $R$ un réel positif non nul tel que $U$ soit inclus dans le pavé $\subset [R^{-1};R]^n$ et remarquer que la partie $\mathbb{D}_U$  est isomorphe au spectre analytique de
$$k\lbrace R^{-1}T_1,R^{-1}T_1^{-1}, \dots R^{-1}T_n, R^{-1}T_n^{-1}, r_1^{-1}S_1,\dots, r_m^{-1}S_m \rbrace/(\mathbf{T}^\mathbf{a_i}-S_i)_{i=1..m}.$$
Ainsi quand $U$ est une partie $\mathbb{Z}$-linéaire par morceaux de $(\mathbb{R}_+^\times)^n$ non vide, $\mathbb{D}_U$ est un domaine $k$-analytique de $\mathbb{G}_{m}^{n,\an}$ et donc un espace $k$-analytique. En effet, on a par construction un G-recouvrement de $\mathbb{D}_U$ par des domaines $k$-affinoïdes de $\mathbb{G}_{m}^{n,\an}$ (\cite{Berkovich93}).
\end{rema}

\begin{rema}\label{rema:maximalité du squelette} Pour toute partie $\mathbb{Z}$-linéaire par morceaux de $(\mathbb{R}_+^\times)^n$ on peut décrire le squelette $\eta_U$ de $\mathbb{D}_U$ comme l'ensemble des points maximaux de $\mathbb{D}_U$ pour la relation : $ x \leq y $ si et seulement si $|f(x)| \leq |f(y)|$ pour toute fonction analytique $f$ sur $\mathbb{D}_U$.

Donc le squelette ne dépend pas du choix de coordonnées et  $\sigma(\eta_U)=\eta_U$ pour tout automorphisme $\sigma$ d'espace analytique de $\mathbb{D}_U$ ($\sigma$ peut agir non trivialement sur $k$).
\end{rema}

\begin{rema} Soit $U$ une partie $\mathbb{Z}$-linéaire par morceaux de $(\mathbb{R}_+^\times)^n$ connexe, la rétraction $\rho$ de $\mathbb{G}_{m}^{n,\an}$ sur son squelette induit une rétraction canonique de $\mathbb{D}_U$ vers $\eta_U$. En fait il existe \cite{Berkovich99} une rétraction par déformation $\Phi :\mathbb{G}_{m}^{n,\an}\times [0;1] \rightarrow S(\mathbb{G}_{m}^{n,\an})$ qui préserve $|T_i|$ pour tout $i$ et donc rétracte $\mathbb{D}_U$ sur $\eta_U$, de la connexité de $\eta_U$ découle alors celle de $\mathbb{D}_U$; la rétraction $\rho$ est simplement $\Phi(\bullet, 1)$.

Si $L$ est une extension complète de $k$ nous utiliserons les notations $\mathbb{D}_{U, L}$ et $\eta_{U, L}$ dans un sens évident. La flèche $\mathbb{D}_{U,L}\rightarrow\mathbb{D}_U$ induit un homéomorphisme $\eta_{U,L} \simeq \eta_U$, et, par les formules, $\mathbb{D}_{U, L}\rightarrow \mathbb{D}_{U}$ commute aux rétractions canoniques de $\mathbb{D}_{U, L}$ sur $\eta_{U,L}$ et de $\mathbb{D}_{U}$ sur $\eta_U$.

Le point $\eta_{\mathbf{r},\widehat{k^a}}$ de $\mathbb{G}_{\widehat{k^a}}^{n,\an}$ est par définition invariant sous l'action de Galois ; il s'ensuit que si $L$ est une extension complète de $k$ admettant un $k$-plongement isométrique dans $\widehat{k^a}$ (appellé \emph{extension presque algébrique de $k$} dans \cite{DucrosEC}), l'image réciproque de $\eta_U$ sur $\mathbb{D}_{U,L}$ est exactement $\eta_{U,L}$.
\end{rema}

\begin{defi} Nous dirons qu'un espace $k$-analytique $X$ est une \emph{$k$-dentelle de type $U$} s'il est isomorphe à $\mathbb{D}_U$ avec $U$ une partie $\mathbb{Z}$-linéaire par morceaux de $(\mathbb{R}_+^\times)^n$ connexe et non vide.

Nous dirons qu'une famille $(f_1, \dots, f_n)$ de fonctions sur une $k$-dentelle $X$ est un \emph{système de coordonnées} si les $f_i$ induisent un isomorphisme entre $X$ et $\mathbb{D}_U$ où $U$ est une partie $\mathbb{Z}$-linéaire par morceaux de $(\mathbb{R}_+^\times)^n$ connexe et non vide.

Un isomorphisme $X\simeq \mathbb{D}_U$ identifie $\eta_U$ à une partie de $X$ qui ne dépend pas du choix du système de coordonnées ; nous l'appellerons le squelette analytique de $X$ et nous le noterons $S^{\an}(X)$.
\end{defi}

\begin{rema} Quand $X$ est un espace $k$-analytique et $L$ une extension galoisienne de $k$ tels que $X_L$ soit une $L$-dentelle, l'action de $\Gal(L/k)$ sur $X_L$ fixe le squelette de $X_L$ ; c'est une conséquence de la \hyperref[rema:maximalité du squelette]{remarque \ref*{rema:maximalité du squelette}}.
\end{rema}

Désormais $U$ désignera toujours une partie $\mathbb{Z}$-linéaire par morceaux connexe et non vide.

\begin{defi} Nous noterons $\mathcal{A}_{U}$ l'algèbre des fonctions analytiques sur $\mathbb{D}_U$ et $\widetilde{\mathcal{A}_{U}}^\mathbf{r}$ sa réduction pour la semi-norme $||.||_{\mathbf{r}}$ évaluation en $\eta_\mathbf{r}$ pour tout $\mathbf{r}\in U$.

Quand $U$ est un singleton nous écrirons simplement $\widetilde{\mathcal{A}_{\lbrace r \rbrace}}$ au lieu de $\widetilde{\mathcal{A}_{\lbrace r \rbrace}}^\mathbf{r}$.
\end{defi}

\begin{rema} L'algèbre $\mathcal{A}_U$ est l'ensemble des séries $\sum_{I\in \mathbb{Z}^n} a_I \mathbf{T}^I$ telles que, pour tout $\mathbf{r}\in U$,
$$\lim_{|I|\rightarrow+\infty}|a_I|\mathbf{r}^I = 0.$$
\end{rema}

\begin{lemm} \label{lemm:réduction de l'algèbre des fonctions d'une polycouronne}
Il existe un isomorphisme de $\widetilde{k}$-algèbres graduées entre $\widetilde{\mathcal{A}_U}^\mathbf{r}$ et l'anneau gradué
$$\widetilde{k}[\tau_{\mathbf{r},1}, \tau_{\mathbf{r},1}^{-1}, \dots, \tau_{\mathbf{r},n}, \tau_{\mathbf{r},n}^{-1}]$$
modulo les identifications de $\widetilde{T_i}^\mathbf{r}$ avec $\tau_{\mathbf{r},i}$ (de degré $r_i$) et de $\widetilde{T_i^{-1}}^\mathbf{r}$ avec $\tau_{\mathbf{r},i}^{-1}$ (de degré $r_i^{-1}$).
\end{lemm}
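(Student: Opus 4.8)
The strategy is exactly the one used for the closed polydisc in \hyperref[lemm:réduction de l'algèbre des fonctions d'un polydisque fermé]{le lemme \ref*{lemm:réduction de l'algèbre des fonctions d'un polydisque fermé}}, the point being that the semi-norme $\|\cdot\|_{\mathbf{r}}$, being evaluation of functions at the point $\eta_{\mathbf{r}}$, is multiplicative and is given on Laurent series by the Gauss formula
\[
\Bigl\|\sum_{I\in\mathbb{Z}^n} a_I\mathbf{T}^I\Bigr\|_{\mathbf{r}} \;=\; \max_{I\in\mathbb{Z}^n}\,|a_I|\,\mathbf{r}^I ,
\]
where the maximum is attained because of the condition $\lim_{|I|\to+\infty}|a_I|\mathbf{r}^I = 0$ that defines $\mathcal{A}_U$. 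Multiplicativity gives $\widetilde{T_i}^{\mathbf{r}}\cdot\widetilde{T_i^{-1}}^{\mathbf{r}} = \widetilde{1}^{\mathbf{r}} = 1$, so that $\tau_{\mathbf{r},i}:=\widetilde{T_i}^{\mathbf{r}}$ (of degree $r_i$) and $\tau_{\mathbf{r},i}^{-1}:=\widetilde{T_i^{-1}}^{\mathbf{r}}$ (of degree $r_i^{-1}$) are mutually inverse homogeneous elements; this yields a morphism of $\widetilde{k}$-algèbres graduées $\widetilde{k}[\tau_{\mathbf{r},1},\tau_{\mathbf{r},1}^{-1},\dots,\tau_{\mathbf{r},n},\tau_{\mathbf{r},n}^{-1}]\to\widetilde{\mathcal{A}_U}^{\mathbf{r}}$ sending $\tau_{\mathbf{r},i}$ to $\widetilde{T_i}^{\mathbf{r}}$, and one has to check it is an isomorphism.

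For surjectivity, represent a homogeneous element of $\widetilde{\mathcal{A}_U}^{\mathbf{r}}$ of degree $\rho$ by a series $f=\sum a_I\mathbf{T}^I\in\mathcal{A}_U$ with $\|f\|_{\mathbf{r}}\le\rho$. The limit condition forces $|a_I|\mathbf{r}^I=\rho$ for only finitely many $I$; the remaining monomials still satisfy $\lim|a_I|\mathbf{r}^I = 0$, so their supremum is attained at a finite multi-index and is therefore $<\rho$. Hence $f$ is congruent modulo $\{\|\cdot\|_{\mathbf{r}}<\rho\}$ to the Laurent \emph{polynôme} $\sum_{|a_I|\mathbf{r}^I=\rho}a_I\mathbf{T}^I$, whose reduction lies in the image. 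For injectivity, take a nonzero homogeneous Laurent polynomial $P=\sum_I c_I\tau_{\mathbf{r}}^I$ of degree $\rho$, lift every nonzero $c_I$ to $a_I\in k$ with $|a_I|=\rho\,\mathbf{r}^{-I}$ and put $a_I=0$ when $c_I=0$; then $f=\sum_I a_I\mathbf{T}^I$ is a finite Laurent polynomial, hence an element of $\mathcal{A}_U$, and the Gauss formula gives $\|f\|_{\mathbf{r}}=\max_{c_I\ne 0}|a_I|\mathbf{r}^I=\rho$, so $\widetilde{f}^{\mathbf{r}}=P$ is nonzero in $\widetilde{\mathcal{A}_U}^{\mathbf{r}}$.

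One may also argue invariantly: the reduction at $\eta_{\mathbf{r}}$ factors through $\widetilde{\mathcal{H}(\eta_{\mathbf{r}})}$, which by the very definition of the Gauss point is the graded field of fractions of $\widetilde{k}[\tau_{\mathbf{r},i}^{\pm1}]$, and the computation above identifies the image of $\mathcal{A}_U$ in it as $\widetilde{k}[\tau_{\mathbf{r},i}^{\pm1}]$; or, choosing a non-empty $\mathbb{Z}$-polytope $U_0\ni\mathbf{r}$ contained in $U$, one restricts to the affinoïde $\mathbb{D}_{U_0}$ and invokes \cite[Proposition 3.1 (i)]{Temkin04} as in the polydisc case. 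The only delicate point is the surjectivity step, where the defining convergence condition of $\mathcal{A}_U$ is precisely what prevents a graded piece of $\widetilde{\mathcal{A}_U}^{\mathbf{r}}$ from being represented by an infinite series rather than a Laurent polynomial; everything else is the bookkeeping of degrees already set up for the closed polydisc, with $\tau_{\mathbf{r},i}$ of degree $r_i$.
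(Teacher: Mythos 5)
Your proof is correct, but it takes a genuinely different route from the paper: the paper disposes of this lemma in one line, as a corollary of \cite[Proposition 3.1 (ii)]{Temkin04}, after observing that $||.||_{\mathbf{r}}$ is the spectral norm of the algebra of the polydisc of polyrayon $\mathbf{r}$ (exactly the route you sketch only as your last alternative, and with item (ii) rather than (i)). Your main argument is instead a self-contained Gauss-norm computation: multiplicativity of $||.||_{\mathbf{r}}$ makes $\widetilde{T_i}^{\mathbf{r}}$ and $\widetilde{T_i^{-1}}^{\mathbf{r}}$ mutually inverse, surjectivity follows because the convergence condition $\lim_{|I|\to\infty}|a_I|\mathbf{r}^I=0$ defining $\mathcal{A}_U$ leaves only finitely many dominant monomials, so any class is represented by a Laurent polynomial, and injectivity follows by lifting homogeneous coefficients to elements of $k$ of the exact absolute value and using the formula $||\sum a_I\mathbf{T}^I||_{\mathbf{r}}=\max|a_I|\mathbf{r}^I$, which rules out cancellation. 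This costs a little bookkeeping (all of which you do correctly: representing a degree-$\rho$ class by an element of norm $\leq\rho$, checking the degrees $\rho\,\mathbf{r}^{-I}$ of the coefficients, and the fact that the tail has norm strictly less than $\rho$ because its terms tend to $0$), but it buys something the citation leaves implicit: it shows directly that the reduction of the \emph{whole} algebra $\mathcal{A}_U$ at $\eta_{\mathbf{r}}$ — and not merely of an affinoid algebra of polyrayon $\mathbf{r}$ — is exactly $\widetilde{k}[\tau_{\mathbf{r},1}^{\pm1},\dots,\tau_{\mathbf{r},n}^{\pm1}]$, using only the explicit description of $\mathcal{A}_U$ given in the remark preceding the lemma. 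Both arguments are valid; yours is more elementary and independent of Temkin's proposition, the paper's is shorter.
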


\begin{proof}
Corollaire de \cite[Proposition 3.1 (ii)]{Temkin04} en remarquant que $||.||_{\mathbf{r}}$ est la norme spectrale de l'algèbre des fonction analytiques du polydisque de polyrayon $\mathbf{r}$.
\end{proof}

\subsection{Monôme strictement dominant et système de fonctions coordonnées} 

\begin{defi}
Soit $f=\sum_I a_I \mathbf{T}^I$ une fonction de $\mathcal{A}_U$. Nous dirons que $f$ a un \emph{monôme strictement dominant de polydegré $I$} si pour tout $\mathbf{r}\in U$ et tout $J\neq I$,
$$
|a_I| \mathbf{r}^I> |a_J|\mathbf{r}^J.
$$
\end{defi}

\begin{lemm} Une fonction de $\mathcal{A}_U$ est inversible si et seulement si elle a un monôme strictement dominant.
\end{lemm}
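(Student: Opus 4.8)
The plan is to treat the two implications separately, using the explicit form of $\mathcal{A}_U$ together with the graded reductions $\widetilde{\mathcal{A}_U}^{\mathbf{r}}$ of Lemma \ref{lemm:réduction de l'algèbre des fonctions d'une polycouronne}. For the sufficiency, suppose $f=\sum_I a_I\mathbf{T}^I$ has a strictly dominant monomial of polydegree $I_0$. Then $a_{I_0}\in k^\times$ (its absolute value dominates the non-negative quantities $|a_J|\mathbf{r}^J$), and $g:=a_{I_0}^{-1}\mathbf{T}^{-I_0}f-1=\sum_{J\neq I_0}(a_J/a_{I_0})\mathbf{T}^{J-I_0}$ lies in $\mathcal{A}_U$ (as $\mathbf{T}^{-I_0}\in\mathcal{A}_U^\times$) and satisfies $\|g\|_{\mathbf{r}}<1$ for every $\mathbf{r}\in U$. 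One then checks that $h:=a_{I_0}^{-1}\mathbf{T}^{-I_0}\sum_{m\geq0}(-g)^m$ is a well-defined element of $\mathcal{A}_U$ with $fh=1$: by the G-covering of $\mathbb{D}_U$ by affinoid domains $\mathbb{D}_V$ with $V$ a $\mathbb{Z}$-polytope contained in $U$, it suffices to see $h|_{\mathbb{D}_V}\in\mathcal{A}_V$, and on such an affinoid the spectral norm $\sup_{\mathbb{D}_V}|g|=\sup_{\mathbf{r}\in V}\|g\|_{\mathbf{r}}$ is attained on the skeleton $\eta_V$, hence equals $\|g\|_{\mathbf{r}_0}<1$ for some $\mathbf{r}_0\in V$, so $\sum_m(-g)^m$ converges in the Banach algebra $\mathcal{A}_V$. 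These local inverses agree on overlaps by uniqueness of inverses and glue to $h$.

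For the necessity, assume $fh=1$ in $\mathcal{A}_U$ and fix $\mathbf{r}\in U$. Since $\|\cdot\|_{\mathbf{r}}$ is the multiplicative seminorm attached to the point $\eta_{\mathbf{r}}$, graded reduction is multiplicative for it, so $\widetilde{f}^{\mathbf{r}}\,\widetilde{h}^{\mathbf{r}}=1$ in $\widetilde{\mathcal{A}_U}^{\mathbf{r}}\cong\widetilde{k}[\tau_1^{\pm},\dots,\tau_n^{\pm}]$. As $\widetilde{k}$ is an integral domain this Laurent polynomial ring is one too, and a homogeneous unit of it is necessarily a monomial $\widetilde{c}\,\tau^{L}$ with $\widetilde{c}$ homogeneous and nonzero in $\widetilde{k}$ — seen by the usual Minkowski-sum argument on the supports, since no cancellation can occur at the extreme vertices of $\operatorname{supp}(\widetilde{f}^{\mathbf{r}})+\operatorname{supp}(\widetilde{h}^{\mathbf{r}})$. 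On the other hand $\widetilde{f}^{\mathbf{r}}=\sum_I\widetilde{a_I}\,\tau^I$, the sum running over the finitely many $I$ with $|a_I|\mathbf{r}^I=\|f\|_{\mathbf{r}}:=\max_J|a_J|\mathbf{r}^J$; comparing, exactly one such $I$ occurs, say $I(\mathbf{r})$. Thus, at every $\mathbf{r}\in U$, $f$ has a unique dominant monomial, of polydegree $I(\mathbf{r})$.

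It remains to show that $I(\mathbf{r})$ is independent of $\mathbf{r}$, and this is where connectedness of $U$ enters. Fix $\mathbf{r}_0\in U$, put $I_0=I(\mathbf{r}_0)$ and $g=a_{I_0}^{-1}\mathbf{T}^{-I_0}f-1\in\mathcal{A}_U$; then $\|g\|_{\mathbf{r}_0}<1$, and for $\mathbf{r}\in U$ one has $\|g\|_{\mathbf{r}}<1$ if and only if $I(\mathbf{r})=I_0$. The function $\mathbf{r}\mapsto\|g\|_{\mathbf{r}}=|g(\eta_{\mathbf{r}})|$ is continuous on $U$: on each $\mathbb{Z}$-polytope $V$ of a covering it equals $\max_I|a_I|\mathbf{r}^I$, a uniform limit of the continuous functions $\max_{|I|\leq N}|a_I|\mathbf{r}^I$ because $|a_I|\mathbf{r}^I\to0$ uniformly on the compact $V$, and continuity on $U$ follows since a G-covering is a covering. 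Hence each fibre of $\mathbf{r}\mapsto I(\mathbf{r})$ is open in $U$, and $U$ connected forces $I(\mathbf{r})\equiv I_0$, i.e. $|a_{I_0}|\mathbf{r}^{I_0}>|a_J|\mathbf{r}^J$ for all $\mathbf{r}\in U$ and all $J\neq I_0$. I expect this last step to be the main obstacle: upgrading a pointwise statement to a uniform one requires both the description of the homogeneous units of $\widetilde{k}[\tau_1^{\pm},\dots,\tau_n^{\pm}]$ and a continuity argument that stays valid when $U$ has empty interior, which is why one works through the G-covering by polytopes and the uniform convergence on each of them.
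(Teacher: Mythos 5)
Your argument is correct and follows essentially the same route as the paper: for sufficiency you write $f=a_{I_0}\mathbf{T}^{I_0}(1+g)$ with $\|g\|_{\mathbf{r}}<1$ and invert by a geometric series, and for necessity you reduce at each $\eta_{\mathbf{r}}$, note that a homogeneous unit of $\widetilde{k}[\tau_1^{\pm},\dots,\tau_n^{\pm}]$ is a monomial, and conclude by local constancy of $\mathbf{r}\mapsto I(\mathbf{r})$ on the connected set $U$. You merely supply details (convergence on the affinoids of a G-covering, continuity of $\mathbf{r}\mapsto\|g\|_{\mathbf{r}}$) that the paper leaves implicit.
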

\begin{proof}
Si elle en a un elle s'écrit $f = a_I \mathbf{T}^I(1+u)$ avec $||u||_\mathbf{r}<1$ pour tout $\mathbf{r}\in U$ et est donc inversible. 

Réciproquement, $\widetilde{f}^{\mathbf{r}}$ est inversible dans $\widetilde{\mathcal{A}_{\lbrace \mathbf{r} \rbrace}}$. Donc s'écrit 
$\widetilde{a_{I(r)}}\tau_{\mathbf{r}}^{I(r)}$, c'est-à-dire que pour tout $\mathbf{r}\in U$ il existe un unique $n$-uplet $I(r)$ tel que
$$|a_{I(r)}|\mathbf{r}^{I(\mathbf{r})}=\max |a_{I}|\mathbf{r}^{I}.$$
La fonction $\mathbf{r}\mapsto I(\mathbf{r})$ est localement constante et donc constante sur $U$.
\end{proof}

\begin{lemm} 
\label{lemm:caractérisation fonctions coordonnées sur les polycouronnes}
Une famille de fonctions $(f_i)_{i=1..n}$ forme un système de fonctions coordonnées si et seulement si pour tout $i\in \lbrace 1, \dots, n\rbrace$, la fonction $f_i$ admet un monôme strictement dominant de polydegré $I_i$ et la matrice ayant pour lignes les $I_i$ appartient à $\mathrm{GL}_n(\mathbb{Z})$.
\end{lemm}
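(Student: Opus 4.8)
Le plan est de traiter les deux implications séparément ; l'implication « si » repose sur une réduction, via un changement de variables monomial, à une situation traitable par un argument d'approximations successives analogue à celui de la \hyperref[pro:système de coordonnées]{proposition \ref*{pro:système de coordonnées}}, et l'implication « seulement si » sur l'inversibilité des fonctions coordonnées jointe à l'unicité du monôme strictement dominant.

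\emph{Supposons} que chaque $f_i$ admette un monôme strictement dominant de polydegré $I_i$ et que la matrice $M$ de lignes $I_1,\dots,I_n$ appartienne à $\mathrm{GL}_n(\mathbb{Z})$. Comme $M$ et $M^{-1}$ sont à coefficients entiers, l'application monomiale $\phi_M:(T_i)\mapsto(\mathbf{T}^{I_i})$ est un automorphisme de $\mathbb{G}_m^{n,\an}$ ; dans les coordonnées logarithmiques elle n'est autre que l'automorphisme linéaire entier $M$, donc elle envoie $\mathbb{D}_U$ isomorphiquement sur $\mathbb{D}_{U'}$, avec $U'$ encore $\mathbb{Z}$-linéaire par morceaux, connexe et non vide. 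Ainsi $(\mathbf{T}^{I_i})_i$ est un système de coordonnées sur $\mathbb{D}_U$, et $(f_i)_i$ en est un si et seulement si $((\phi_M^*)^{-1}(f_i))_i$ en est un sur $\mathbb{D}_{U'}$. Or $(\phi_M^*)^{-1}$ transforme les monômes en monômes à exposants entiers et préserve les semi-normes $\|.\|_{\mathbf{r}}$ à reparamétrage de $U$ près ; comme $I_i$ est la $i$-ème ligne de $M$, on a $(\phi_M^*)^{-1}(\mathbf{T}^{I_i})=T_i$, d'où $(\phi_M^*)^{-1}(f_i)=c_i\,T_i\,(1+u_i')$ avec $c_i\in k^\times$, $u_i'$ de terme constant nul et $\|u_i'\|_{\mathbf{s}}<1$ pour tout $\mathbf{s}\in U'$. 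Quitte à effectuer ensuite le changement de variables $T_i\mapsto c_iT_i$ (système de coordonnées vers $\mathbb{D}_{(|c_i|r_i)}$), on est ramené à l'énoncé : \emph{si $f_i=T_i(1+u_i)$ avec $u_i$ de terme constant nul et $\|u_i\|_{\mathbf{r}}<1$ pour tout $\mathbf{r}\in U$, alors $(f_i)_i$ est un système de coordonnées}. Comme $|f_i(x)|=|T_i(x)|$ en tout point, le morphisme $\varphi$ induit par $(f_i)_i$ envoie $\mathbb{D}_U$ dans $\mathbb{D}_U$, et l'on en cherche un inverse en résolvant formellement $T_i=S_i(1+w_i(\mathbf{S}))$ par la suite d'approximations $v^{(0)}=1$, $v^{(m+1)}_i=\bigl(1+u_i(S_1v^{(m)}_1,\dots,S_nv^{(m)}_n)\bigr)^{-1}$. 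Sur chaque domaine affinoïde $\mathbb{D}_W$ d'un $G$-recouvrement de $\mathbb{D}_U$ par des $\mathbb{Z}$-polytopes convexes compacts, la fonction continue $\mathbf{r}\mapsto\|u_i\|_{\mathbf{r}}$ atteint sur $W$ sa borne supérieure, laquelle est donc $<1$ ; la suite converge alors dans l'algèbre affinoïde $\mathcal{A}_W$ vers des $g_i^{(W)}=S_i(1+w_i)$ avec $\|w_i\|_{\mathbf{r}}<1$. Par unicité de la solution ces fonctions se recollent en des $g_i\in\mathcal{A}_U$, et le morphisme $\psi$ qu'elles induisent vérifie $\psi\circ\varphi=\mathrm{Id}$ et $\varphi\circ\psi=\mathrm{Id}$ (la seconde égalité s'obtenant, par exemple, en appliquant le même procédé à $(g_i)_i$). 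Donc $\varphi$ est un isomorphisme, et en le recomposant avec $\phi_M$ puis le changement d'échelle, la famille $(f_i)_i$ de départ est un système de coordonnées.

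\emph{Réciproquement}, soit $(f_i)_i$ un système de coordonnées induisant $\varphi:\mathbb{D}_U\xrightarrow{\sim}\mathbb{D}_V$. Chaque $f_i=\varphi^*(S_i)$ est inversible dans $\mathcal{A}_U$ puisque $S_i$ l'est dans $\mathcal{A}_V$, donc, par le lemme précédent, $f_i=c_i\mathbf{T}^{I_i}(1+u_i)$ possède un monôme strictement dominant de polydegré $I_i$. De même, notant $\psi=\varphi^{-1}$, $\mathbf{g}=(g_1,\dots,g_n)$ avec $g_j=\psi^*(T_j)=d_j\mathbf{S}^{J_j}(1+v_j)$ de polydegré dominant $J_j$, et $M$, $N$ les matrices de lignes respectives les $I_i$ et les $J_j$. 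Comme $\psi$ préserve le squelette (\hyperref[rema:maximalité du squelette]{remarque \ref*{rema:maximalité du squelette}}), on a $\|\psi^*(h)\|_{\mathbf{s}}=\|h\|_{\mathbf{r}}$ pour un certain $\mathbf{r}\in U$ dépendant de $\mathbf{s}\in V$, en particulier $\|\psi^*(u_i)\|_{\mathbf{s}}<1$. En appliquant $\psi^*$ à $f_i$ et en utilisant $\varphi\circ\psi=\mathrm{Id}$, donc $\psi^*(f_i)=\psi^*(\varphi^*(S_i))=S_i$, on obtient dans $\mathcal{A}_V$
\[
  S_i = c_i\,\mathbf{g}^{I_i}\,(1+\psi^*(u_i)) = \Bigl(c_i\prod_j d_j^{M_{ij}}\Bigr)\,\mathbf{S}^{(MN)_i}\cdot\prod_j(1+v_j)^{M_{ij}}\cdot(1+\psi^*(u_i)),
\]
où $(MN)_i$ désigne la $i$-ème ligne du produit matriciel $MN$. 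Le dernier facteur est un élément inversible de $\mathcal{A}_V$, produit de facteurs de la forme $1+(\text{petit})$, donc de monôme strictement dominant de polydegré $0$. Ainsi $S_i$, dont le monôme strictement dominant est $\mathbf{S}^{e_i}$, en possède aussi un de polydegré $(MN)_i$ ; l'unicité du monôme strictement dominant d'une fonction (deux polydegrés dominants distincts $I\neq I'$ fourniraient $|a_I|\mathbf{s}^I>|a_{I'}|\mathbf{s}^{I'}$ et l'inégalité opposée pour le même $\mathbf{s}$ de $V\neq\emptyset$) donne $(MN)_i=e_i$ pour tout $i$, c'est-à-dire $MN=I_n$. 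Comme $M$ et $N$ sont des matrices carrées à coefficients entiers, $\det M=\pm 1$ et $M\in\mathrm{GL}_n(\mathbb{Z})$.

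Le point délicat est la construction de l'isomorphisme inverse dans la première implication : il faut y contourner la non-compacité éventuelle de $U$ en travaillant sur un $G$-recouvrement par des $\mathbb{Z}$-polytopes compacts, sur chacun desquels $\sup_{\mathbf{r}}\|u_i\|_{\mathbf{r}}<1$, avant de recoller les inverses partiels ; le reste de la preuve est de nature formelle.
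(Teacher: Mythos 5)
Your proof is correct and follows the same overall strategy as the paper: for the ``only if'' direction, invertibility of the $f_i$ gives the strictly dominant monomials and the inverse isomorphism forces the matrix to lie in $\mathrm{GL}_n(\mathbb{Z})$ (you spell out the computation $MN=I_n$ by comparing dominant polydegrees, where the paper just says ``on vérifie''); for the ``if'' direction, both you and the paper first use the monomial change of variables attached to $M\in\mathrm{GL}_n(\mathbb{Z})$ and a rescaling to reduce to $f_i=T_i(1+u_i)$ with $\|u_i\|_{\mathbf{r}}<1$, and then invert by successive approximations. The implementations of that last step differ: the paper builds $g_1$ with $g_1(\mathbf{f})=T_1$ by repeatedly subtracting the finitely many ``large'' monomials, with all estimates taken at one fixed $\eta_{\mathbf{r}}$, whereas you run a multiplicative fixed-point iteration $v^{(m+1)}_i=\bigl(1+u_i(S_1v^{(m)}_1,\dots,S_nv^{(m)}_n)\bigr)^{-1}$ on each compact $\mathbb{Z}$-polytope $W$ of a $G$-recouvrement, where compactness yields $\sup_{\mathbf{r}\in W}\|u_i\|_{\mathbf{r}}<1$ and hence a genuine contraction in the Banach algebra $\mathcal{A}_W$, before gluing the solutions by uniqueness. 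What your variant buys is a cleaner treatment of the uniformity in $\mathbf{r}$ when $U$ is not compact (where $\sup_{\mathbf{r}\in U}\|u_i\|_{\mathbf{r}}$ may equal $1$), a point the paper's own redaction handles rather tersely; the paper's variant, in exchange, stays entirely at the level of explicit Laurent series and needs no recollement. Your closing remark that the second identity $\psi\circ\varphi=\mathrm{Id}$ follows by applying the same procedure to $(g_i)$ is the standard two one-sided-inverses argument and is fine, though it deserves the one line making it explicit.
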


\begin{rema}
Par exemple sur la $k$-dentelle de type $\lbrace(1,1)\rbrace$, si l'on pose 
$$(f_1, f_2)=(T_1 T_2^{-1},T_2) \text{ et } (g_1,g_2)=(T_1 T_2, T_2)$$
on obtient deux systèmes de fonctions coordonnées induisant des automorphismes de la bicouronne (l'un étant inverse de l'autre).
\end{rema}

\begin{proof}
On notera $\mathbf{I}$ la matrice ayant pour lignes les $I_i$.

\textbf{Implication.} Chaque $f_i$ est inversible puisque fonction coordonnée, et admet donc un monôme strictement dominant de polydegré $I_i$. On vérifie que les polydegrés des monômes strictement dominant des fonctions induisant l'isomorphisme inverse donnent immédiatement l'inverse de $\mathbf{I}$.

\textbf{Réciproque.} Notons $J_i$ le $i$-ème vecteur ligne de $\mathbf{I}^{-1}$. Quitte à remplacer $(T_1,\dots, T_n)$ par $(\mathbf{T}^{J_1}, \dots, \mathbf{T}^{J_n})$ (système de fonctions coordonnées induisant un morphisme d'inverse évident), on peut supposer que $f_i$ a pour monôme strictement dominant $a_i T_i$. Donc, quitte à remplacer à nouveau $(T_1,\dots, T_n)$ par $(a_1^{-1} T_1, \dots, a_n^{-1} T_n)$, on peut supposer que $f_i$ a pour monôme strictement dominant $T_i$.

On peut écrire $f_i=T_i+f_i^{(1)}$ où $f_i^{(1)}\in \mathcal{A}_U$ et $||f_i^{(1)}||_{\mathbf{r}}<r_i$ pour tout $\mathbf{r}\in U$.

Montrons qu'une telle famille induit un automorphisme de $\mathbb{D}_{U}$. Il suffit qu'elles induisent un automorphisme de $\mathcal{A}_U$. Si nous arrivions à construire pour tout $i\in \lbrace 1, \dots, n \rbrace$ une fonction $g_i$ de $\mathcal{A}_U$ telle que $g_i(f_i)=T_i$ nous en aurions terminé. Nous allons construire $g_1$ par approximation successives. 

Si $f_1^{(1)}= 0$ la fonction $g_1^{(1)}=T_1$ convient, sinon fixons $\mathbf{r}\in U$, écrivons $f_1^{(1)}=\sum a_{i,I}^{(1)} \mathbf{T}^I$ et notons $\delta = \max_i(\frac{||f_i^{(1)}||_{\mathbf{r}}}{r_i})$ (par construction $\delta<1$). Remarquons que pour tout $I\in \mathbb{Z}^n$
$$||\mathbf{T}^I -\mathbf{f}^I||_{\mathbf{r}}=||\mathbf{T}^I -(\mathbf{T}-\mathbf{f}^{(1)})^I||_{\mathbf{r}}\leq\delta \mathbf{r}^I.$$
Ainsi, si l'on note
$$\mathcal{E} = \lbrace I \in \mathbb{Z}^n, \delta||f_1^{(1)}||_{\mathbf{r}}<|a_{i,I}| \mathbf{r}^I\leq ||f_1^{(1)}||_{\mathbf{r}} \rbrace$$
l'ensemble fini (puisque $f_1^{(1)}$ est un élément non nul de $\mathcal{A}_U$) des multi-indices des monômes de norme strictement supérieure à $\delta ||f_1^{(1)}||_{\mathbf{r}}$, il suit que
$$ ||f_1^{(1)}-\sum_{I\in \mathcal{E}}a_I\mathbf{f}^I||_{\mathbf{r}}\leq \delta||f_1^{(1)}||_{\mathbf{r}}.$$
Posant $f^{(2)}_1 = f_1'-\sum_{I\in\mathcal{E}} a_{1, I}\mathbf{f}^I$ et $g_1^{(2)} = g_1^{(1)} - \sum a_{1, I}\mathbf{T}^I$,
\begin{itemize}
\item $f_1^{(2)}$ appartient à $\mathcal{A}_U$ et $||f_1^{(2)}||_{\mathbf{r}}\leq \delta||f_1^{(1)}||_{\mathbf{r}}$
\item et $g^{(2)}_1(f_1, \dots, f_n) = T_1 + f_1^{(2)}$.
\end{itemize}
En itérant on obtient une suite $(f_1^{(n)})_{n\in\mathbb{N}}$ de fonctions de $\mathcal{A}_U$ qui tend vers $0$ et une suite $(g_1^{(n)})_{n\in\mathbb{N}}$ de fonctions de $\mathcal{A}_U$ dont on note $g_1$ la limite. Vérifions que $g_1\in \mathcal{A}_U$. Notons $b_I$ le coefficient de multi-indice $I$ de $g_1$ et $a_{1,I}^{(n)}$ celui de $f_1^{(n)}$. Soit $\varepsilon >0$
$$\lim_{i\rightarrow\infty} ||f_1^{(i)}||_\mathbf{r}=0 \Rightarrow  \exists m\in \mathbb{Z}, \forall i\geq m, \forall I \in\mathbb{Z}^n, |a_{1,I}^{(i)}|\mathbf{r}^I <\varepsilon,$$
et
$$\forall i\in \mathbb{N}, f_1^{(i)}\in \mathcal{A}_U \Rightarrow \forall \varepsilon>0, \exists N_i\in \mathbb{N}, \forall |I|\geq N_i, \forall I \in\mathbb{N}, |a_{1,I}^{(i)}|\mathbf{r}^I <\varepsilon.$$
Donc $|b_I|\mathbf{r}^I = \max_i |a_{1,I}^{(i)}|<\varepsilon$ pour tout  $|I|\geq \max_{i=1..m} (N_i)$, c'est-à-dire que $g_1\in \mathcal{A}_U$.
\end{proof}

\begin{defi}
Il suit de la discussion ci-dessus que le groupe 
$$\mathcal{L}(\mathbb{D}_U):=\mathcal{A}_U^\times/k^\times(1+\mathcal{A}_U^{\circ\circ})$$
est libre de rang $n$, et une famille de fonctions $(f_1, \dots, f_n)$ est un système de fonctions coordonnées si et seulement si elles sont inversibles et s'envoient sur une base de $\mathcal{L}(\mathbb{D}_U)$ (comme $\mathbb{Z}$-module). 

Nous appellerons \emph{réseau} de $\mathbb{D}_U$ le groupe $\mathcal{L}(\mathbb{D}_U)$.

Nous dirons qu'un automorphisme de $\mathbb{D}_U$ \emph{agit trivialement sur le réseau} s'il agit trivialement sur $\mathcal{L}(\mathbb{D}_U)$.
\end{defi}

\begin{rema}
\label{rema:fixer le résau, fixer le squelette}
Quand $U$ une partie $\mathbb{Z}$-linéaire par morceaux de $(\mathbb{R}_+^\times)^n$ connexe et non vide, et $L$ une extension galoisienne finie complète de $k$, l'action de Galois de $\Gal(L/k)$ sur $\mathbb{D}_{U, L}$, induite par son action naturelle sur $\mathbb{G}^{n,\an}_L$, est triviale sur le réseau : il suffit de prendre des coordonnées de $\mathbb{G}^{n,\an}_k$.

Ainsi, si un espace $k$-analytique $X$ est une $k$-dentelle, l'action de $\Gal(L/k)$ est triviale sur le réseau de $X_L$.

Quand $U$ est d'intérieur non vide un automorphisme de $\mathbb{D}_U$ agit trivialement sur le réseau si et seulement s'il agit trivialement sur le squelette analytique $\eta_U$ de $\mathbb{D}_U$.
\end{rema}

\subsection{Résultat}

\begin{nota}
Soit $U\subset(\mathbb{R}_+^\times)^n$ et $\mathbf{s}\in (\mathbb{R}_+^\times)^n$ on note $\mathbf{s}.U$ l'image de  $U$  via la translation multiplicative $\mathbf{r}\mapsto (s_1r_1,\dots, s_n r_n)$.
\end{nota}

\begin{theo}
\label{theo:formes polycouronnes}
Soit $k$ un corps ultramétrique complet, $L$ une extension galoisienne finie modérément ramifiée de $k$ et $X$ un espace $k$-analytique. Alors $X_L$ est une $L$-dentelle de type $U\subset (\mathbb{R}_+^\times)^n$ telle que l'action de $\Gal(L/k)$ est triviale sur le réseau de $X_L$ si et seulement si $X$ est une $k$-dentelle de type $\mathbf{r}.U$ avec $\mathbf{r}\in |L^\times|^n$.
\end{theo}

\begin{proof} 
Si $X$ est une $k$-dentelle de type $\mathbf{r}.U$, $X_L$ est une $L$-dentelle de type $\mathbf{r}.U$, donc  de type $U$ – puisque soit $(\lambda_i)_{i=1..n}$ une famille de $L^n$ avec $|\lambda_i|=r_i$, l'application $T_i\mapsto \lambda_i T_i$ induit un isomorphisme entre $\mathbb{D}_{U, L}$ et $\mathbb{D}_{\mathbf{r}.U,L}$ – et l'action de $\Gal(L/k)$ sur $X_L$ est triviale sur le réseau (\hyperref[rema:fixer le résau, fixer le squelette]{cf. remarque \ref*{rema:fixer le résau, fixer le squelette}}).

Réciproquement. On désigne par $(T_1, \dots, T_n)$ un système de fonctions coordonnées sur $X_L$ induisant un isomorphisme vers $\mathbb{D}_{U,L}$.
Soit $\mathcal{A}$ (resp. $\mathcal{B}$) l'algèbre des fonctions analytiques sur $X$ (resp. $X_L$). 

L'isomorphisme $X_L \rightarrow \mathbb{D}_{U,L}$ induit un homéomorphisme stable sous Galois entre $S^{\an}(X_L)$ le squelette analytique de $X_L$ et $\eta_{U,L}$. Le passage au quotient sous Galois induit dès lors un homéomorphisme entre une partie de $X$ et $\eta_{U,L}$. 

Pour tout $\mathbf{r}\in U$, nous notons naturellement $||.||_{\mathbf{r}}$ les semi-normes sur $\mathcal{A}$ et $\mathcal{B}$ correspondant à l'évaluation en $\eta_{\mathbf{r}, L}$ modulo les deux homéomorphismes mentionnés ci-dessus. Nous notons  $\widetilde{\mathcal{A}}^{\mathbf{r}}$ et $\widetilde{\mathcal{B}}^{\mathbf{r}}$ les réductions graduées de $\mathcal{A}$ et $\mathcal{B}$ pour les semi-normes $||.||_{\mathbf{r}}$.

Il existe (\hyperref[lemm:réduction de l'algèbre des fonctions d'une polycouronne]{lemme \ref*{lemm:réduction de l'algèbre des fonctions d'une polycouronne}}) un isomorphisme entre $\widetilde{\mathcal{B}}^{\mathbf{r}}$ et 
$$\widetilde{L}[\tau_{\mathbf{r},1}, \tau_{\mathbf{r},1}^{-1}, \dots, \tau_{\mathbf{r},n}, \tau_{\mathbf{r},n}^{-1}]$$
modulo les identifications de $\widetilde{T_i}^\mathbf{r}$ avec $\tau_{\mathbf{r},i}$ (de degré $r_i$) et de $\widetilde{T_i^{-1}}^\mathbf{r}$ avec $\tau_{\mathbf{r},i}^{-1}$ (de degré $r_i^{-1}$).

Par hypothèse l'action de Galois préserve le réseau de la couronne, autrement dit, pour tout $g\in \Gal(L/k)$ et tout $i\in \lbrace 1, \dots, n\rbrace$ la fonction $g.T_i$ a pour coefficient dominant $\alpha T_i$ avec $|\alpha|=1$ (puisque les $g.T_i$ induisent un automorphisme). Ainsi, pour tout $g\in \Gal(\widetilde{L}/\widetilde{k})$, tout $\mathbf{r}\in U$ et tout $i\in \lbrace 1, \dots, n\rbrace$ 
$$g.\tau_{\mathbf{r},i}= a_{i,g}\tau_{\mathbf{r},i}$$
où $a_{i,g}\in \widetilde{L}_1^\times$ (car, notant $m$ l'ordre de $g$ et $\alpha$ le degré de $a_{i,g}$ l'identité $g^m.(\tau_{\mathbf{r},i}=\tau_{\mathbf{r},i}$ implique que $\alpha^m = 1$ d'où $\alpha=1$). Les $a_{i,g}$ ne dépendent pas de $\mathbf{r}$ puisque obtenus par réduction du coefficient du même monôme strictement dominant.

La situation est exactement analogue au cas des polydisques. Pour tout $i\in \lbrace 1, \dots, n\rbrace$ l'application $g\mapsto a_{i,g}$ est un élément de $\textrm{Z}^1\left(\Gal(\widetilde{L}/\widetilde{k}),\widetilde{L}_1^\times\right)$ donc, d'après la version graduée du théorème $90$ de Hilbert (\hyperref[coro:Hilbert 90 multiplicatif en dimension 1]{corollaire \ref*{coro:Hilbert 90 multiplicatif en dimension 1}}), il existe $\lambda_i \in L^\times$ (il n'y a pas de raison que $|\lambda_i|=1$) tel que $a_{i,g}=\widetilde{\lambda_i}^{-1}(g.\widetilde{\lambda_i})$ pour tout $g\in \Gal(\widetilde{L}/\widetilde{k})$ et tout $i\in \lbrace 1, \dots, n\rbrace$.

Notons $\Lambda = (|\lambda_1|, \dots, |\lambda_n|)$. Les applications $T_i\mapsto \lambda_i T_i$ induisent un isomorphisme entre $\mathbb{D}_U$ et $\mathbb{D}_{\Lambda.U}$ (d'inverse évidemment induit par $T_i \mapsto \lambda_i^{-1} T_i$) donc, quitte à remplacer $U$ par $\Lambda.U$, on peut supposer, pour tout $g\in \Gal(\widetilde{L}/\widetilde{k})$, tout $\mathbf{r}\in U$ et tout $i\in \lbrace 1, \dots, n\rbrace$ que
$$g.\tau_{\mathbf{r},i}= \tau_{\mathbf{r},i}.$$

Choisissons un $d$-uplet $(l_1,\dots, l_d)$ d'éléments de $L^\times$ tel que $(\widetilde{l}_1, \dots, \widetilde{l}_d)$ soit une base de $\widetilde{L}$ sur $\widetilde{k}$, comme il ne nous coute rien de prendre $l_1=1$ nous le ferons. Les flèches naturelles 
$$\mathcal{A}\otimes_k L = \oplus \mathcal{A}.l_i \rightarrow \mathcal{B}$$
et, pour tout $\mathbf{r}\in U$,
$$\widetilde{\mathcal{A}}^{\mathbf{r}}\otimes_{\widetilde{k}} \widetilde{L} = \oplus \widetilde{\mathcal{A}}^{|l_i|^{-1}\mathbf{r}}.\widetilde{l_i} \rightarrow \widetilde{\mathcal{B}}^{\mathbf{r}}$$
sont des isomorphismes. Le premier vient du \hyperref[lemm:Extension des scalaires]{lemme \ref*{lemm:Extension des scalaires}}. Le deuxième du \hyperref[lemm:Extension des scalaires]{lemme \ref*{lemm:Extension des scalaires, réduction graduée, norme spectrale}} et de la remarque qui le suit.

Pour tout $i\in\lbrace 1,\dots, n\rbrace$, on peut donc écrire $T_i = \sum_{j=1..d} f_{i,j} l_j$ avec $f_{i,j}\in \mathcal{A}$ pour tout $j\in\lbrace 1,\dots, d\rbrace$. En passant à la réduction on obtient, pour tout $\mathbf{r}\in U$ et tout $i\in\lbrace 1,\dots, n\rbrace$,
$$
\tau_{\mathbf{r},i} = \widetilde{f_{i,1}}^\mathbf{r} + \sum_{j\neq 1} (\widetilde{f_{i,j}}^\mathbf{r})_{|l_j|^{-1}r_i}.\widetilde{l_j}
$$
où $\tau_{\mathbf{r},i}$, $\widetilde{f_{i,1}}^{\mathbf{r}}$ et tous les $(\widetilde{f_{i,j}}^\mathbf{r})_{|l_j|^{-1}r_i}$ (c'est-à-dire la composante de degré $|l_j|^{-1}r_i$ de la réduction de $f_{i,j}$ dans $\widetilde{\mathcal{A}}^\mathbf{r}$) sont invariants sous Galois et $(\widetilde{1},\widetilde{l_2},\dots,\widetilde{l_d})$ forme une base de $\widetilde{\mathcal{B}}^\mathbf{r}$ sur $\widetilde{\mathcal{A}}^\mathbf{r}$. On a égalité des termes Galois invariants, donc $\tau_{\mathbf{r},i} = \widetilde{f_{i,1}}^{\mathbf{r}}$.

Ainsi il existe une famille $(f_1, \dots, f_n)$ de fonctions de $\mathcal{A}$ (on a écrit $f_i$ pour $f_{i,1}$) telles que, après extension des scalaires, on ait, pour tout $i \in \lbrace 1, \dots, n \rbrace$, et tout $\mathbf{r}\in U$,
$$
\widetilde{f_i}^\mathbf{r} = \tau_{\mathbf{r},i}.
$$
Ou encore, pour tout $i\in\lbrace 1,\dots, n\rbrace$, dans $\mathcal{B}$, $f_i$ a pour monôme strictement dominant $a_iT_i$ avec $|a_i|=1$ et $\widetilde{a_i}=\widetilde{1}$. En particulier, d'après le \hyperref[lemm:caractérisation fonctions coordonnées sur les polycouronnes]{lemme \ref*{lemm:caractérisation fonctions coordonnées sur les polycouronnes}}, les $(f_i)$ forment un système de fonctions coordonnées sur $X_L$ (qui induit un automorphisme).

Il existe donc une famille de fonctions analytiques $(f_1, ..., f_n)$ dans $\mathcal{A}$ qui induisent un morphisme $\varphi : X\rightarrow \mathbb{D}_U$ tel que $\varphi_L : X_L \rightarrow \mathbb{D}_{U, L}$ soit un isomorphisme.
 
Comme être un isomorphisme descend par extension quelconque des scalaires \cite[voir][Theorem 9.2]{Conrad10}, $\varphi$ est un isomorphisme.
\end{proof}

\subsection{Exemple de forme non triviale de couronne}

Il était nécessaire que $\eta_{U,L}$ soit invariant sous l'action de Galois; donnons une forme modérément ramifiée de couronne qui n'est pas une couronne quand ce n'est pas le cas. Prenons comme corps de base $k=\mathbb{Q}_3$, comme extension modérément ramifiée $L=\mathbb{Q}_3[i]$ et intéressons-nous à la partie de $\mathbb{P}_{\mathbb{Q}_3}^{1,\an}$ définie par:
\[
	\lbrace x\in \mathbb{P}_{\mathbb{Q}_3}^{1,\an}, \frac{1}{4}\leq|(T^2+1)(x)|  \rbrace
\]
que l'on note $X$ (voir \autoref{fig:Forme non triviale de couronne}, le segment $\eta_{[0;\infty]}$ donné comme repère ne joue pas de rôle particulier). L'espace $\mathbb{Q}_3$-analytique $X$ n'est pas une $\mathbb{Q}_3$-couronne (la partie fermée ainsi définie aurait deux bouts de type $2$ ou $3$ et non un) et pourtant, après extension des scalaires, on peut écrire
\[
	X_{\mathbb{Q}_3[i]}=\lbrace x\in \mathbb{P}_{\mathbb{Q}_3[i]}^{1,\an}, \frac{1}{2}\leq|\frac{T+i}{T-i}(x)|\leq 2\rbrace
\]
et remarquer que l'on obtient une $X_{\mathbb{Q}_3[i]}$-couronne avec $f=\frac{T+i}{T-i}$ comme fonction coordonnée. Notant $\sigma$ la conjugaison complexe, $\sigma(f)=-f$ et l'action de Galois sur le réseau de $X_{\mathbb{Q}_3[i]}$ n'est pas triviale.

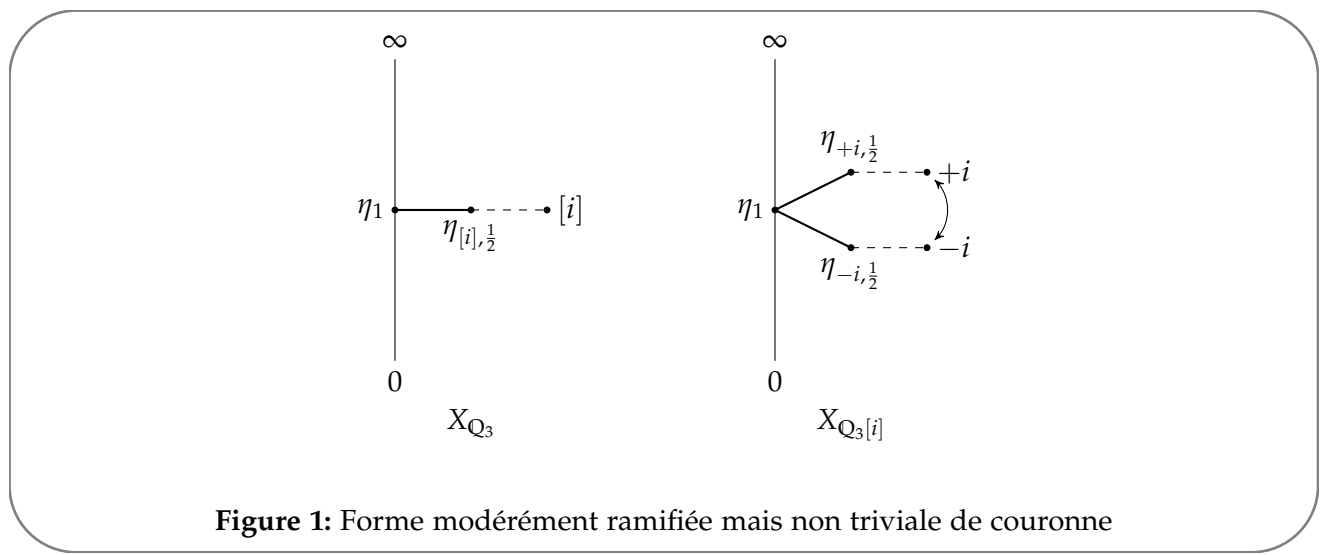
\begin{figure}[htb]
  \begin{center}
    \begin{tikzpicture}[scale=1]
      \draw (0,-2) -- (0,2);
      \node[below] at (0,-2) {$0$};
      \node[above] at (0,2) {$\infty$};
      
      \node[left] at (0,0) {$\eta_{1}$}; 
      \draw[black,fill=black] (0,0) circle (.1em);     
      
      \draw[thick] (0,0) -- (1,0);
      \draw[dashed] (1,0) -- (2,0);
      \draw[black,fill=black] (1,0) circle (.1em);
      \node[below] at (1,0) {$\eta_{[i],\frac{1}{2}}$}; 
      \draw[black,fill=black] (2,0) circle (.1em);
      \node[right] at (2,0) {$[i]$}; 
      
      \node[below] at (1,-2.5) {$X_{\mathbb{Q}_3}$};
      
      \begin{scope}[shift={(5,0)}]
      \draw (0,-2) -- (0,2);
      \node[below] at (0,-2) {$0$};
      \node[above] at (0,2) {$\infty$};
      \node[left] at (0,0) {$\eta_{1}$}; 
      \draw[black,fill=black] (0,0) circle (.1em);     
      
      \draw[thick] (0,0) -- (1,0.5);
      \draw[dashed] (1,0.5) -- (2,0.5);
      \draw[black,fill=black] (1,0.5) circle (.1em);
      \node[above] at (1,0.5) {$\eta_{+i,\frac{1}{2}}$}; 
      \draw[black,fill=black] (2,0.5) circle (.1em);
      \node[right] at (2,0.5) {$+i$}; 
      
      \draw[thick] (0,0) -- (1,-0.5);
      \draw[dashed] (1,-0.5) -- (2,-0.5);
      \draw[black,fill=black] (1,-0.5) circle (.1em);
      \node[below] at (1,-0.5) {$\eta_{-i,\frac{1}{2}}$}; 
      \draw[black,fill=black] (2,-0.5) circle (.1em);
      \node[right] at (2,-0.5) {$-i$}; 
      
      \node[below] at (1,-2.5) {$X_{\mathbb{Q}_3[i]}$};
      \draw [<->] (2.1,-0.4) to [out=45,in=315] (2.1,0.4);
      \end{scope}
    \end{tikzpicture}
  \end{center}
\caption{Forme modérément ramifiée mais non triviale de couronne}
\label{fig:Forme non triviale de couronne}
\end{figure}

\section*{Remerciements} 
Je tiens à faire part de toute ma gratitude à mon directeur Antoine Ducros, pour son soutien indéfectible et les innombrables discussions que nous avons eu au cours de l'élaboration de cet article.
Je suis aussi très reconnaissant envers Lorenzo Fantini et Daniele Turchetti pour m'avoir toujours tenu au courant de leurs résultats sur les couronnes \cite{FantiniTurchetti17} et nos échanges subséquents.

\printbibliography

\end{document}